\newtheorem{theorem}{Theorem}[section]
\newtheorem{lemma}[theorem]{Lemma}
\newtheorem{corollary}[theorem]{Corollary}
\newcommand\lref[1]{Lemma~\ref{#1}}
\newcommand\floor[1]{\left\lfloor #1 \right\rfloor}
\title{A Renormalization Scheme for Semi-Regular Continued Fractions}
\author{Niels Langeveld and David Ralston}
\begin{document}
\maketitle
\section{Introduction}

In this article we will study a renormalization scheme with which we find all semi-regular continued fractions of a number in a natural way. In \Cref{sec:TslowTfast} we define two maps, $\hat{T}_{slow}$ and $\hat{T}_{fast}$: these maps are defined for $(x,y) \in [0,1]^2$, where $x$ is the number for which a semi-regular continued fraction representation is developed by $\hat{T}_{slow}$ according to the parameter $y$. Importantly, the set of all possible semi-regular continued fraction representations of $x$ are bijectively constructed as the parameter $y$ varies (\Cref{theorem:all CF expansions}), making $\hat{T}_{slow}$ a natural setup for discussing these representations of $x$. The map $\hat{T}_{fast}$ is a ``sped-up" version of the map $\hat{T}_{slow}$, and we show that $\hat{T}_{fast}$ is ergodic with respect to a probability measure which is mutually absolutely continuous with Lebesgue measure on $[0,1]^2$(\Cref{theorem - T_fast is ergodic}). In contrast, $\hat{T}_{slow}$ preserves no such measure, but does preserve an infinite, $\sigma$-finite measure mutually absolutely continuous with Lebesgue measure (\Cref{corollary - T_slow not ergodic}). 

In \Cref{section - symbolic coding} we show that the maps $\hat{T}_{slow}$ and $\hat{T}_{fast}$ applied to the point $(x,y)$ can generate a sequence of substitutions which generate a \textit{symbolic coding} of the orbit of $y$ with respect to the intervals $[0,1-x]$, $[1-x,1]$. These substitutions are shown to naturally relate to finding a sequence of those $n \in \mathbb{Z}^+$ such that $-nx \mod 1$ best approximates $y$ for all $-ix \mod 1$, $1 \leq i \leq n$ (\Cref{lemma - slow encoding lemma}. Ergodicity of $\hat{T}_{fast}$ then leads to statements of a generic growth rate for this sequence (\Cref{cor:approximating sequence growth rates}).

Finally, in \Cref{section - relation to continued fractions}, we highlight how our scheme can be used to generate semi-regular continued fractions, explicitly mentioning regular continued fractions \cite{DK2}, backward continued fractions \cite{R}, $\alpha$-continued fractions \cite{N}, a natural counterpart of $\alpha$-continued fractions \cite{KLMM}, and Lehner continued fractions \cite{L}. Ergodicity of $\hat{T}_{fast}$ and lack of ergodicity of $\hat{T}_{slow}$ also leads to a statement regarding the generic growth rate of denominators of convergents of $x$ across our parameterization of semi-regular continued fractions (\Cref{theorem:generic denominator growth}).

\section{The maps \texorpdfstring{$\hat{T}_{slow}$}{} and \texorpdfstring{$\hat{T}_{fast}$}{} }\label{sec:TslowTfast}
Let $X$ be the unit circle $\mathbb{R}/\mathbb{Z}$, but with all $y\in X$ having a ``left" and ``right" version $y^-$ and $y^+$, which satisfy $y^- < y^+$. Then we write $X=[0^+,1^-]$. We give $X$ the topology generated by all open sets of the form $(a^+,b^-)$ for $0 \leq a<b\leq 1$. For $x \in \mathbb{R}$, let $R_x$ denote rotation by $x$, i.e. $R_x(y^{\pm})=(y+x)^{\pm} \mod 1$. Observe that the sets $[0^+,(1-x)^-]$ and $[(1-x)^+,1^-]$ form a partition of $X$ into two disjoint compact sets. In studying properties of $R_x$, there is no meaningful distinction between $R_x^n(y^+)$ and $R_x^n(y^-)$ when $y \notin x\mathbb{Z} \mod 1$, so we identify such points with one another. We will therefore suppress the notation required by $X$, simply considering $y \in [0,1]$. Intervals always begin at the right-sided version of a number and end at the left-sided version if an endpoint is some $nx \mod 1$: e.g. $[0,1-x]$ and $[1-x,1]$ are disjoint under this convention.

Circle rotations are convenient to represent as the simplest nontrivial type of \emph{interval exchange transformation}, or \emph{IET}, a bijective orientation-preserving piecewise isometry of the interval $[0,1]$. Specifically, rotations can be represented as exchanges of two intervals (a $2$-IET). See \Cref{figure - rotation as 2-iet} for a presentation of $R_x$ in this way. Note that 
\[ 
R_x(y) = \begin{cases} y+x, & y \in [0,1-x],\\ y+x-1, & y \in [1-x,1].\end{cases}
\]

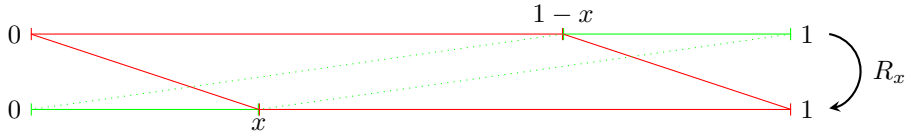
\begin{figure}[thb]
\center{\begin{tikzpicture}[xscale=1]
\draw[red, |-|] (0,1)--(7,1);
\draw[green, |-|] (7,1)--(10,1);
\draw[green, |-|] (0,0)--(3,0);
\draw[red, |-|] (3,0)--(10,0);
\draw[red] (0,1)--(3,0);
\draw[red] (7,1)--(10,0);
\draw[green, dotted] (7,1)--(0,0);
\draw[green, dotted] (10,1)--(3,0);
\node[left] at (0,1){$0$};
\node[left] at (0,0){$0$};
\node[right] at (10,1){$1$};
\node[right] at (10,0){$1$};
\node[above] at (7,1){$1-x$};
\node[below] at (3,0){$x$};

\draw[thick,-stealth]
        ($(10.5,1)$) arc
        [start angle=80,
        end angle=-80,
        radius=0.5] ;
\node[right] at (10.95,0.5){$R_x$};      
\end{tikzpicture}}
\caption{\label{figure - rotation as 2-iet}Rotation by $x$ presented as a $2$-IET.}
\end{figure}

Next, let $A=[1-x,1]$. For any $y \in A$, define the \emph{return time} of $y$ to be $N(y)$, where
\[
N(y) = \min\left\{ n \in \mathbb{Z}^+ : R_x^n(y) \in A \right\}.
\]
The \emph{first-return map on $A$} is given by $R_x^{N(y)}(y)$. It is well-known that the first-return map on $A$ has exactly two return times. Namely, for any $y\in A$ we either have $N(y)=a$ or $N(y)=a+1$ where $a$ is the largest positive integer so that $ax<1$, \textit{i.e.} $a=\lfloor 1/x \rfloor$, the first partial quotient of $x$.
For $y\in [-(a+1)x,1]$ (modulo one) we have $N(y)=a$ and for $y\in  [1-x,-(a+1)x]$ we have $N(y)=a+1$; see \Cref{figure - first return map construction}.
The first return map is therefore another $2$-IET given by 
\[ 
\hat{R}_{\hat{x}}(y) = 
\begin{cases} 
y+\hat{x}, & y \in [1-x,-(a+1)x \mod 1],\\
y+\hat{x} -1, & y \in [-(a+1)x \mod 1,1];
\end{cases}
\]
where $\hat{x}=(a+1)x -1$. This map is either a rotation by $(a+1)x-1$ or rotation by $1-ax$: the two choices are isomorphic, differing only by a choice of orientation. We adopt the convention that \emph{the shorter return time determines the orientation and rotation amount of the first-return map}. Our motivation for this choice is as follows:

\begin{figure}[bht]
\center{\begin{tikzpicture}[xscale=1]
\draw[|-|] (2,1)--(4,1);
\draw[|-|] (4,1)--(10,1);
\draw[dashed, |-|] (0,1)--(2,1);
\draw[|-|] (2,0)--(8,0);
\draw[|-|] (8,0)--(10,0);
\draw (4,1)--(2,0);
\draw (10,1)--(8,0);
\draw[dotted] (4,1)--(10,0);
\draw[dotted] (2,1)--(8,0);
\node[left] at (2,0){$1-x$};
\node[above] at (2,1){$1-x$};
\node[left] at (0,1){$0$};
\node[right] at (10,0){$1$};
\node[right] at (10,1){$1$};
\node[below] at (8,0){$ax$};
\node[above] at (4.5,1){$-(a+1)x \mod 1$};

\draw[thick,-stealth]
        ($(10.5,1)$) arc
        [start angle=80,
        end angle=-80,
        radius=0.5] ;
\node[right] at (10.95,0.5){$R_x^{N(y)}$};  

\end{tikzpicture}}
\caption{\label{figure - first return map construction}We construct the first-return map on the interval $[1-x,1]$. We obtain another $2$-IET (i.e. rotation) with two different return times; $a$ (solid lines) and $a+1$ (dotted lines).}
\end{figure}
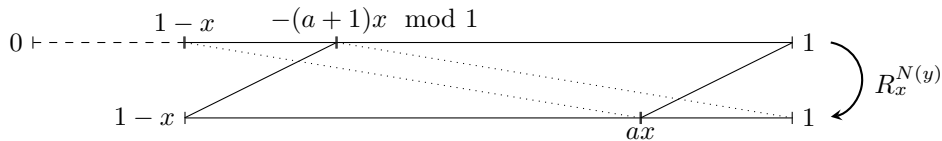

\begin{lemma}\label{lemma - first return}
The first-return map on $A=[1-x,1]$, after rescaling and subject to the convention that shorter return times dictate the choice of direction, is isomorphic to rotation by $T(x)$, where 
\[T(x) = \frac{1}{x} - \left\lfloor \frac{1}{x}\right\rfloor\]
is the Gauss map.

The map $\tau: [1-x,1] \mapsto [0,1]$ given by 
\begin{equation}\label{eqn - tau}\tau(y) = \frac{1-y}{x} = T(x) - \frac{y}{x} \mod 1\end{equation}
is an orientation-reversing isomorphism between the first return map on $[1-x,1]$ and rotation by $T(x)$ on $[0,1]$. 
\begin{proof}
We see in \Cref{figure - first return map construction} that within $A$, the shorter return time is subtraction of $(1-ax)$. After rescaling by $1/x$ to present this map as acting on an interval of length one, we have subtraction of $1/x - a = T(x)$. Since the direction of rotation is also determined by this shorter return time, we reverse orientation and conclude that the first-return map may be presented as $R_{T(x)}$. Referring again to \Cref{figure - first return map construction}, we see that we need $\tau$ to be a linear function which maps the interval $[1-x,1]$ to $[0,1]$ with reversed orientation: this determines $\tau$.
\end{proof}
\end{lemma}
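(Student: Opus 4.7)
The plan is to work directly from the explicit form of the first-return map on $A=[1-x,1]$ established just before the statement. The two return times are $a=\lfloor 1/x\rfloor$ and $a+1$; on the piece of $A$ with return time $a$, the map sends $y\mapsto y+ax-1$, i.e.\ it subtracts $1-ax$ from $y$, while on the piece with return time $a+1$ it sends $y\mapsto y+(a+1)x-1$, which adds $\hat{x}=(a+1)x-1$. Both descriptions realize the same rotation on the length-$x$ circle $A$ and differ only in which direction is designated as ``forward.''

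Adopting the stated convention (the shorter return time $a$ fixes direction), I would regard the first-return map as subtraction of $1-ax$, whose ``forward'' orientation is opposite to the ambient orientation on $[0,1]$. Rescaling $A$ to unit length by dividing by $x$, the subtraction amount becomes $(1-ax)/x = 1/x - a = T(x)$, so after rescaling the map is a rotation by $T(x)$ on a unit interval, provided we pass to a coordinate system which reverses the ambient orientation on $A$.

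To make this identification precise I would exhibit $\tau$ as the unique affine orientation-reversing bijection $[1-x,1]\to[0,1]$ sending $1-x$ to $1$ and $1$ to $0$; this forces $\tau(y)=(1-y)/x$. Then the verification reduces to checking the conjugacy $\tau\circ \hat{R}_{\hat{x}}=R_{T(x)}\circ\tau$ piecewise on the two subintervals of $A$: on the short-return piece one computes $\tau(y+ax-1) = (2-y-ax)/x = (1-y)/x + T(x) = \tau(y)+T(x)$, while on the long-return piece the same substitution yields $\tau(y)+T(x)-1$, which is exactly the wrap-around branch of $R_{T(x)}$. Along the way it is also worth noting that $\tau$ sends the break point $-(a+1)x \bmod 1$ to $1-T(x)$, the break point of $R_{T(x)}$, so the piecewise structure matches.

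The main obstacle is purely bookkeeping: ensuring that the two sign conventions (subtraction by $1-ax$ on $A$ versus addition by $T(x)$ on $[0,1]$) reconcile correctly under $\tau$, and that the orientation reversal built into $\tau$ swaps the two pieces of the partition of $A$ with the two pieces of the partition of $[0,1]$ used in defining $R_{T(x)}$. Once those checks are done on each piece the lemma follows immediately.
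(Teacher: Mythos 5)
Your proposal is correct and follows essentially the same route as the paper: identify the shorter return time with subtraction of $1-ax$, rescale by $1/x$ to get $T(x)$, and let the orientation-reversing affine map $\tau(y)=(1-y)/x$ carry this to $R_{T(x)}$. The only difference is that you verify the conjugacy $\tau\circ\hat{R}_{\hat{x}}=R_{T(x)}\circ\tau$ explicitly on both branches (and check that the break point $2-(a+1)x$ maps to $1-T(x)$), whereas the paper reads these facts off its figure; your computations are all correct.
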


Certainly \Cref{lemma - first return} is well-known regarding the rotation amount of the induced map; we present it because we will utilize the specific isomorphism $\tau$.

What happens if we construct the first-return map on $[0,1-x]$ instead? We may reconsider rotation by $x$ in one direction as rotation by $1-x$ in the other, so that by reversing orientation, now the interval $[0,1-x]$ is of length equal to the rotation amount. Applying \Cref{lemma - first return} then yields:

\begin{corollary}
The first-return map on $[0,1-x]$ is isomorphic to rotation by $T(1-x)$, where the isomorphism is given by 
\begin{equation} \label{eqn - alternate tau} \tau(y) = \frac{y}{1-x}. 
\end{equation}
\end{corollary}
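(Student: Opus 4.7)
The plan is to reduce the statement to \Cref{lemma - first return} by conjugating the rotation $R_x$ with the orientation-reversing involution $\phi(y)=1-y$ on $[0,1]$. This conjugation turns $R_x$ into $R_{1-x}$ and swaps the two atoms of the partition $[0,1-x]\cup[1-x,1]$, putting us exactly in the setting of the lemma after replacing $x$ with $1-x$.

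First I would verify the conjugacy $\phi\circ R_x = R_{1-x}\circ\phi$, which is a direct computation modulo the identification of $\pm$ versions of orbit points of $0$ introduced at the start of \Cref{sec:TslowTfast}. Under $\phi$, the interval $[0,1-x]$ is carried bijectively onto $[x,1]=[1-(1-x),1]$; this is the interval of length equal to the new rotation amount $1-x$, which is precisely the interval to which \Cref{lemma - first return} applies for the rotation $R_{1-x}$. Applying the lemma then furnishes an isomorphism from the induced first-return map on $[x,1]$ to rotation by $T(1-x)$, explicitly given by $\tilde\tau(z)=(1-z)/(1-x)$.

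Since $\phi$ is a return-time-preserving bijection between the two rotation pictures, the convention that the shorter return time dictates orientation transports without modification. Conjugating the lemma's isomorphism back through $\phi$ then yields the desired isomorphism $\tau=\tilde\tau\circ\phi$, which simplifies to $\tau(y)=(1-(1-y))/(1-x)=y/(1-x)$; note the positive slope, consistent with the composition of two orientation reversals. The only bookkeeping worth flagging is the handling of $\pm$ versions at orbit points of $0$ under the conjugation, but this is already absorbed by the conventions established just before \Cref{lemma - first return}, so beyond that I expect no serious obstacle.
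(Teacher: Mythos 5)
Your argument is correct and is precisely the paper's intended derivation: the paper obtains the corollary by "reconsidering rotation by $x$ in one direction as rotation by $1-x$ in the other" (i.e.\ conjugating by $y\mapsto 1-y$) and then applying \Cref{lemma - first return}, with the two orientation reversals composing to make $\tau$ orientation-preserving. You have simply made the conjugacy $\phi\circ R_x=R_{1-x}\circ\phi$ and the composition $\tau=\tilde\tau\circ\phi$ explicit, which matches the paper's (essentially one-line) justification.
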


Note that in contrast to \lref{lemma - first return}, in this scenario the isomorphism $\tau$ is orientation-preserving. We reversed orientation once to consider the original map $R_x$ to instead be $R_{1-x}$, and then the first-return map reversed orientation again by applying \Cref{lemma - first return}.

We present both first-return maps simultaneously in \Cref{figure - two first returns in one}: on $[1-x,1]$ we obtain rotation by $T(x)$ with orientation reversed, while on $[0,1-x]$ we obtain rotation by $T(1-x)$ with orientation preserved. Here $N^\prime(y)=\min\left\{ n \in \mathbb{Z}^+ : R_x^n(y) \in [0,1-x] \right\}.$ The lengths of the red and green arrows represent the rotation amount of this first-return map (not yet rescaled to be in an interval of length one), where the arrows begin at the point which is mapped to the origin by the isomorphism $\tau$ and point in the direction of the induced rotation.
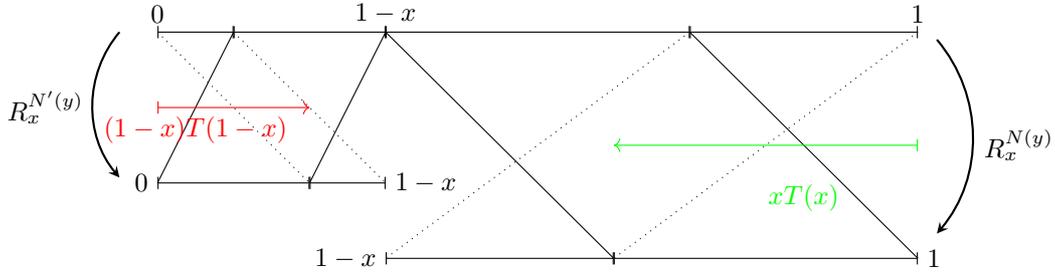
\begin{figure}[bht]
\center{\begin{tikzpicture}[xscale = 1]
\draw[|-|](0,3)--(1,3);
\draw[|-|](1,3)--(3,3);
\draw[|-|](3,3)--(7,3);
\draw[|-|](7,3)--(10,3);
\draw[|-|](0,1)--(2,1);
\draw[|-|](2,1)--(3,1);
\draw[|-|](3,0)--(6,0);
\draw[|-|](6,0)--(10,0);
\draw (1,3)--(0,1);
\draw (3,3)--(2,1);
\draw (3,3)--(6,0);
\draw (7,3)--(10,0);
\draw[dotted] (0,3)--(2,1);
\draw[dotted] (1,3)--(3,1);
\draw[dotted] (7,3)--(3,0);
\draw[dotted] (10,3)--(6,0);
\node [above] at (0,3){$0$};
\node [above] at (1,3){};
\node [above] at (3,3){$1-x$};
\node [above] at (7,3){};
\node [above] at (10,3){$1$};
\node [left] at (0,1){$0$};
\node [below] at (2,1){};
\node [right] at (3,1){$1-x$};
\node [left] at (3,0){$1-x$};
\node [below] at (6,0){};
\node [right] at (10,0){$1$};
\draw[red] [|->] (0,2)--(2,2);
\draw[green] [|->] (10,1.5)--(6,1.5);
\node [red,below] at (.5,2){$(1-x)T(1-x)$};
\node [green,above] at (8.5,.5){$xT(x)$};
\draw[thick,-stealth]
        ($(10.25,2.9)$) arc
        [start angle=40,
        end angle=-40,
        radius=2] ;
\node[right] at (10.75,1.5){$R_x^{N(y)}$};  
\draw[thick,-stealth]
        ($(-0.5,3)$) arc
        [start angle=140,
        end angle=220,
        radius=1.5] ;
\node[right] at (-2.1,2){$R_{x}^{N^\prime(y)}$};  
\end{tikzpicture}}
\caption{\label{figure - two first returns in one} The two different first-return maps, on $[0,1-x]$ on the left and on $[1-x,1]$ on the right.}
\end{figure}

We can now define our first new function: $\hat{T}_{slow}$ acts on the pair $(x,y)$, where $x$ represents a rotation amount and $y \in X$. Depending on whether $y \in [0,1-x]$ or $y \in [1-x,1]$ we take the first-return map on the appropriate interval (either $R_x^{N'(y)}$ or $R_x^{N(y)}$, respectively) along with the specified isomorphism $\tau$ to obtain a new rotation amount and point in the space $X$:
\begin{equation}\label{eqn - T_slow}
\hat{T}_{slow}(x,y) = 
\begin{cases} 
\left( T(1-x),\frac{y}{1-x} \right) &y \in [0,1-x], \\ 
\left( T(x), \frac{1-y}{x} \right)& y \in [1-x,1].
\end{cases}
\end{equation}

For iterations of this map we present a useful visual aid. Let $x_0=x$ be our rotation amount and $y_0=y \in [0,1]$, as before. Generate the sequences $\{x_n\}$, $\{y_n\}$, $\{\hat{a}_n\}$ as $(x_n,y_n)=\hat{T}_{slow}^n(x_0,y_0)$ and $\hat{a}_n$ the return time used to define the $n$-th first-return map. Then we may equivalently follow the algorithm given in \Cref{figure - graph presentation}.
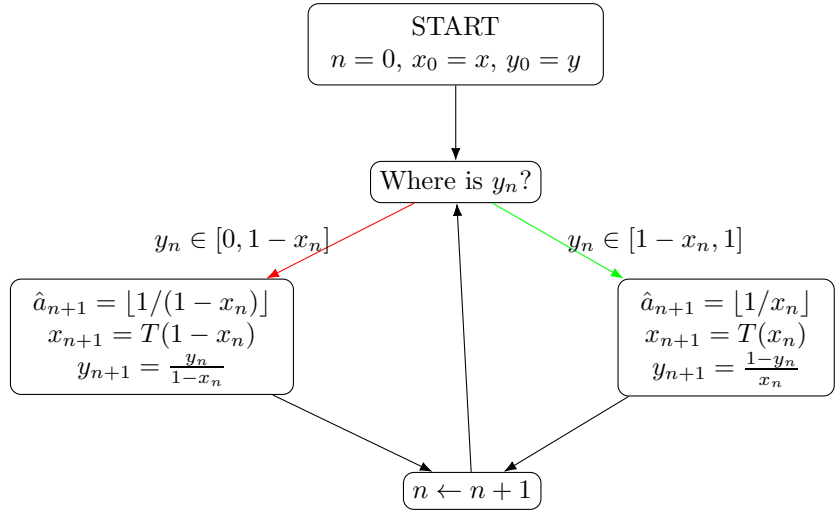
\begin{figure}[bth]
\center{\begin{tikzpicture}[xscale=1]
\node(start)[draw,rectangle, rounded corners]{$\begin{array}{c}\textrm{START}\\n=0, \, x_0=x, \, y_0 = y\end{array}$};
\node(set a)[below = of start, draw, rectangle, rounded corners]{Where is $y_n$?};
\node(regular)[below right = of set a, draw, rectangle, rounded corners]{$\begin{array}{c}\hat{a}_{n+1} = \lfloor 1/x_n \rfloor \\ x_{n+1} = T(x_n)\\y_{n+1} = \frac{1-y_n}{x_n} 
\end{array}$};
\node(reversing)[below left = of set a, draw, rectangle, rounded corners]{$\begin{array}{c}\hat{a}_{n+1} = \lfloor 1/(1-x_n) \rfloor \\ x_{n+1} = T(1-x_n)\\y_{n+1} = \frac{y_n}{1-x_n}
\end{array}$};
\node(increment)[below left = of regular,draw, rectangle, rounded corners]{$n \gets n+1$};
\draw[-Latex] (start) -- (set a);
\draw[-Latex,green] (set a) -- (regular) node[black,midway,right]{$y_n \in [1-x_n,1]$};
\draw[-Latex,red] (set a) -- (reversing) node[black,midway,left]{$y_n \in [0,1-x_n]$};
\draw[-Latex] (regular) -- (increment);
\draw[-Latex] (reversing) -- (increment);
\draw[-Latex] (increment) -- (set a);
\end{tikzpicture}}
\caption{\label{figure - graph presentation}
An informal graph algorithm showing the action of $\hat{T}_{slow}$. The choice of green versus red arrows aligns with the color scheme in \Cref{figure - two first returns in one}.}
\end{figure}

At first glance the presentation in \Cref{figure - graph presentation} does not add anything beyond the definition of $\hat{T}_{slow}$ in \Cref{eqn - T_slow}. The two colored arrows, however, provide a clear way to distinguish when $\hat{T}_{slow}$ acts on the first coordinate $x_n$ via the Gauss map (the green arrow) or a modified version thereof (the red arrow), and in both cases forces us to compute the shorter return times $\hat{a}$, which we will make use of later. The following lemma is a technical necessity, but also introduces a critical link between the map $\hat{T}_{slow}$ and well-known techniques in the study of continued fractions:

\begin{lemma}\label{lemma - fast returns defined}
For any initial choice of $(x_0,y_0)$, any execution of the algorithm in \Cref{figure - graph presentation} must have infinitely many edges which are either green, or red with $\hat{a}_n\geq 2$; equivalently, no $(x_0,y_0)$ enters into an infinite loop of following the red arrow with $\hat{a}_n=1$.
\begin{proof}
The red edge involves computing
\[\hat{a}_{n+1} = \floor{\frac{1}{1-x_n}}, \qquad x_{n+1}=\frac{1}{1-x_n}-\hat{a}_{n+1}.\]
Suppose the regular continued fraction expansion of some $x_n$ is given by
\[x_n = \cfrac{1}{k_1+\cfrac{1}{k_2+\cfrac{1}{k_3+\ddots}}}=[k_1,k_2,k_3,\ldots].\]
Then we see that
\begin{equation}\label{eqn:first appearance of insertion/deletion}
1-x_n = \begin{cases} [k_2+1,k_3,\ldots] & (k_1=1), \\
[1,k_1-1,k_2,\ldots] & (k_1 \neq 1). 
\end{cases}
\end{equation}
So if $k_1 = 1$ and we follow the red edge
\[ \hat{a}_1 = \floor{\frac{1}{1-x}} = k_2+1 \geq 2,\]
while if $k_1 \geq 2$ and we follow the red edge
\[\hat{a}_1 = \floor{\frac{1}{1-x}}=1,\] we will have
\[x_{n+1} = T(1-x) = [k_1-1,k_2,\ldots].\]
It is therefore only possible to follow the red edge with $\hat{a}_n=1$ at most $k_1$ consecutive times.
\end{proof}
\end{lemma}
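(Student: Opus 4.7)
The plan is to isolate the effect of a single red edge with $\hat{a}=1$ on the $x$-coordinate alone and then show this regime cannot persist forever. By inspection of the algorithm in \Cref{figure - graph presentation}, such an edge executes precisely when $y_n\in[0,1-x_n]$ and $\lfloor 1/(1-x_n)\rfloor=1$, which forces $x_n\in(0,1/2)$, and produces $x_{n+1}=1/(1-x_n)-1 = x_n/(1-x_n) =: f(x_n)$. So any consecutive run of red-with-$\hat{a}=1$ steps corresponds exactly to iterating the one-dimensional map $f$ restricted to $(0,1/2)$, independent of $y$.

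I would then analyze $f$ directly. Since $f(x)-x = x^2/(1-x) > 0$ for every $x\in(0,1)$ and $f$ is continuous and strictly increasing there, its only fixed point on $[0,1/2]$ is $0$. Suppose for contradiction that from some index $N$ onward the algorithm follows red with $\hat{a}=1$ at every step. Then $\{x_n\}_{n\geq N}\subset(0,1/2)$ is strictly increasing and bounded above, hence converges to some $L\in[x_N,1/2]$; continuity of $f$ near $L<1$ forces $L=f(L)$, giving $L=0$ and contradicting $L\geq x_N>0$. Thus only finitely many consecutive red-with-$\hat{a}=1$ edges can occur, and applying this to every tail of the execution yields infinitely many edges that are green or red with $\hat{a}\geq 2$.

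I expect the main obstacle to be tidying up boundary and rationality cases rather than the dynamical core. If some $x_n=0$ (possible when the Gauss map hits a rational input) the rotation degenerates and the algorithm needs interpretation with care; for rational $x_n=p/q\in(0,1/2)$, the identity $f(p/q)=p/(q-p)$ shows the denominator strictly decreases along the run, which gives the escape in an explicit finite number of steps without invoking continuity. Aside from such minor technicalities, the heart of the argument is the classical observation that $0$ is the unique, one-sidedly repelling parabolic fixed point of $f$ on $[0,1/2]$, which forbids any infinite trapped orbit.
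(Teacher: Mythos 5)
Your proof is correct, but it takes a genuinely different route from the paper's. The paper works arithmetically: it writes $x_n=[k_1,k_2,\ldots]$, observes that a red edge with $\hat{a}=1$ replaces $x_n$ by $T(1-x_n)=[k_1-1,k_2,\ldots]$, and concludes the run can last at most $k_1=\lfloor 1/x_n\rfloor$ steps. You instead isolate the interval map $f(x)=x/(1-x)$ governing the $x$-coordinate during such a run and argue dynamically: $f$ lies strictly above the diagonal on $(0,1/2)$ with $0$ as its unique (parabolic) fixed point, so a bounded increasing orbit trapped in $(0,1/2)$ would have to converge to a positive fixed point, which does not exist. Both arguments are sound; the two are really the same decrement seen in different coordinates, since $f^k(x)=x/(1-kx)$ exceeds $1/2$ precisely when $k\geq 1/x-2$, matching the paper's count. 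What the paper's version buys is exactly that explicit count, which is reused later to define the Markov cells $PG_{i,n}$ and $PR_{i,n}$ and to connect red edges to singularization/insertion via the identity for $1-x_n$; your version is softer but self-contained and makes rigorous the ``indifferent fixed point at $x=0$'' heuristic the paper only gestures at. One caveat applies to both proofs equally: if $x_n=0$ (which can occur for rational $x_0$, e.g. $x_0=1/2$ after one green edge) the orbit genuinely does loop on red with $\hat{a}=1$ forever, so the lemma as stated needs $x_0$ irrational or at least $x_n\neq 0$ throughout; you flag this boundary case explicitly, and your denominator-decrement argument for rational $x_n\in(0,1/2)$ correctly handles the nondegenerate rational steps.
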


Equation \eqref{eqn:first appearance of insertion/deletion} gives us a relation with what are called \emph{singularizations and insertions} of the standard continued fraction expansion of $x$.
These terms will be more precisely defined and elaborated upon in \Cref{section - relation to continued fractions}.

We now construct a natural Markov partition for $\hat{T}_{slow}$. The line $y=1-x$ is used to determine how $\hat{T}_{slow}$ acts, and then one must determine either $\floor{1/x}$ or $\floor{1/(1-x)}$, so the trapezoids bounded for $n=1,2,3,\ldots$ either by

\begin{itemize}
\item $1-x \leq y \leq 1$, $1/(n+1) \leq x \leq 1/n$, or
\item $0 \leq y \leq 1-x$, $(1-1/n) \leq x \leq (1-1/(n+1))$
\end{itemize}
form exactly that partition we are seeking; see Figure \ref{figure - markov partition for slow map}.
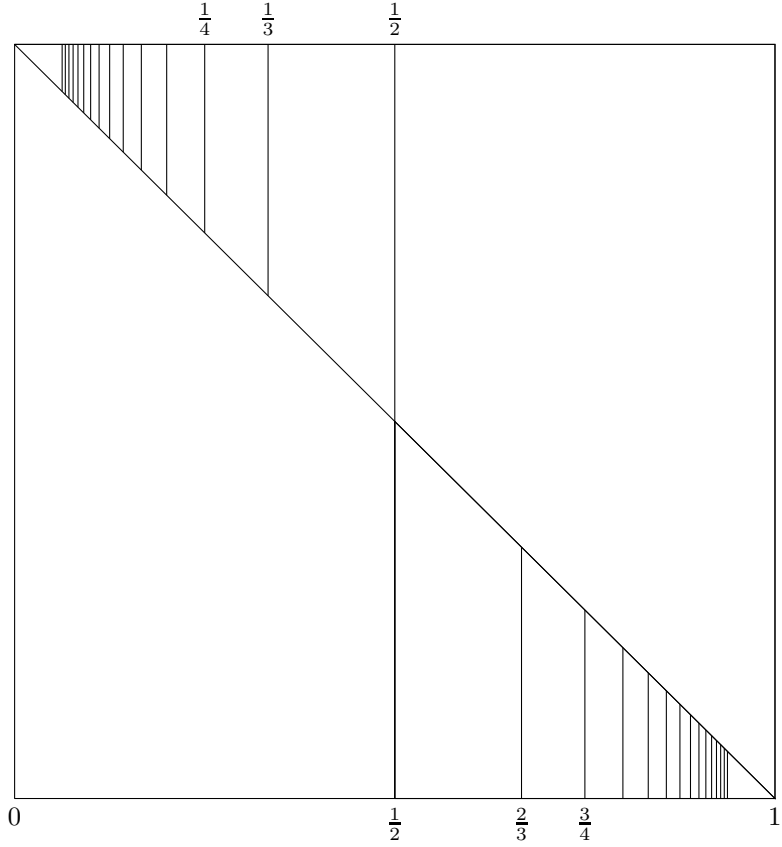
\begin{figure}[bht]
\center{\begin{tikzpicture}[scale=10]
\draw (0,0)node[below]{$0$}--(1,0)node[below]{$1$}--(1,1)--(0,1)--(0,0);
\draw (1/2,0)--(1,0)--(1/2,1/2)--(1/2,0);
\draw (0,1)--(1,1)--(1,0)--(0,1);
\draw (1/2,0)node[below]{ $\frac{1}{2}$};
\draw (2/3,0)node[below]{ $\frac{2}{3}$};
\draw (3/4,0)node[below]{ $\frac{3}{4}$};
\draw (1/2,1)node[above]{ $\frac{1}{2}$};
\draw (1/3,1)node[above]{ $\frac{1}{3}$};
\draw (1/4,1)node[above]{ $\frac{1}{4}$};

\foreach \j in {2,...,15}{%
	\draw ({1/(\j+1)},{1-1/(\j+1)})--({1/(\j+1)},1);
	}

\foreach \l in {1,...,15}{%
	\draw ({\l/(\l+1)},0)--({\l/(\l+1)},{1/(\l+1)});
	}
\draw (1/2,0)--(1/2,1);
\end{tikzpicture}
}

\caption{\label{figure - markov partition for slow map}A Markov partition for $\hat{T}_{slow}$.}
\end{figure}

The map $\hat{T}_{slow}$ may be extended to act on the segments $x=0$ and $x=1$: 
\[\hat{T}_{slow}(0,y) = (0,y), \qquad \hat{T}_{slow}(1,y)=(0,1-y).\]
As these segments have empty interior (in $[0,1]^2$), they are included to complete the partition but will have no effect on our dynamical statements later.

We ask, then, if the map $\hat{T}_{slow}$ is ergodic with respect to a probability measure mutually absolutely continuous with respect to Lebesgue measure. Observe, however, that for $x$ close to zero, we have $1/(1-x)-1$ also small; the map $\hat{T}_{slow}$ acts ``more and more like identity" the closer we get to $x=0$. Since, $\frac{d}{dx}(1/(1-x)-1)=\frac{1}{(1-x)^2}$, for $x=0$ any $y\in[0,1]$ is an indifferent fixed point of the map $\hat{T}_{slow}$. This observation suggests that $\hat{T}_{slow}$ does not preserve any \emph{finite} measures mutually absolutely continuous to Lebesgue measure. We will indeed eventually show this result (\Cref{corollary - T_slow not ergodic}), but for now we create a ``sped-up" version of $\hat{T}_{slow}$ to avoid this problem.

Define $F = \left\{(x,y):y \geq 1-x \quad \textrm{or} \quad x>1/2\right\}$ and $N_F(x,y)=\min \{ n \geq 0 : \hat{T}_{slow}^n(x,y)\in F \}$.
The fast map is then given by $\hat{T}_{fast}(x,y)=\hat{T}_{slow}^{N_F(x,y)+1}(x,y)$. It follows from \Cref{lemma - fast returns defined} that this map is well-defined: we apply $\hat{T}_{slow}$ until we either have $y \geq 1-x$ (in which case we will follow the green edge) or until we have $y \leq 1-x$ but with $x>1/2$, in which case following the red edge will have $\hat{a} \geq 2$. In other words, the map $\hat{T}_{fast}$ iterates $\hat{T}_{slow}$ until entering into $F$, then applies $\hat{T}_{slow}$ one more time. Equivalently, all consecutive occurrences of `red edge with $\hat{a}=1$' under $\hat{T}_{slow}$ are absorbed into the next occurrence of `green edge, or red edge with $\hat{a} \geq 2$.'

We begin the study of $\hat{T}_{fast}$ with a lemma which describes the action of $\hat{T}_{slow}$ on the interval $y \in [0,1-x]$ for $x<1/2$.

\begin{lemma}
\label{lemma - slow T acting on interval}
With $x<1/2$ fixed, $\hat{T}_{slow}$ linearly maps $\{x\} \times [0,1-x]$ onto $\{T(1-x)\} \times [0,1]$, $T(1-x) = x/(1-x)$, and $\floor{1/T(1-x)}=\floor{1/x}-1$.
\begin{proof}
That $\hat{T}_{slow}$ acts linearly on the second coordinate is immediate: when $y \leq 1-x$, $\hat{T}_{slow}(x,y) = (T(1-x),y/(1-x))$, and $x$ is fixed. When $x < 1/2$ we have $\lfloor \frac{1}{1-x}\rfloor =1$ which gives
\[
T(1-x)=\frac{1}{1-x} - \floor{\frac{1}{1-x}}=\frac{1}{1-x}-1= \frac{x}{1-x}.
\]
The last claim follows from reciprocating the above.
\end{proof}
\end{lemma}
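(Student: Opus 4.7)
The plan is to verify each of the three claims by direct computation from the definition of $\hat{T}_{slow}$ in \Cref{eqn - T_slow}, using only the hypothesis $x<1/2$.

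First I would address linearity and the image. For $y \in [0,1-x]$ the first branch of \Cref{eqn - T_slow} applies, giving $\hat{T}_{slow}(x,y) = (T(1-x), y/(1-x))$. With $x$ fixed, the first coordinate is constant and the second coordinate is an affine function of $y$ with positive slope $1/(1-x)$, so the map is linear (affine) in $y$. As $y$ ranges over $[0,1-x]$, the second coordinate $y/(1-x)$ ranges over $[0,1]$, establishing that the image is exactly $\{T(1-x)\} \times [0,1]$.

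Next I would compute $T(1-x)$ explicitly. Since $x<1/2$, we have $1-x > 1/2$, hence $1 < 1/(1-x) < 2$, so $\lfloor 1/(1-x) \rfloor = 1$. Then
\[
T(1-x) = \frac{1}{1-x} - \left\lfloor \frac{1}{1-x} \right\rfloor = \frac{1}{1-x} - 1 = \frac{x}{1-x}.
\]

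Finally, reciprocating gives $1/T(1-x) = (1-x)/x = 1/x - 1$. Since subtracting an integer commutes with the floor function, $\lfloor 1/T(1-x) \rfloor = \lfloor 1/x \rfloor - 1$, provided $1/x$ is not itself an integer one larger than the integer value; but since $1/x - 1$ differs from $1/x$ by exactly the integer $1$, the identity $\lfloor 1/x - 1 \rfloor = \lfloor 1/x \rfloor - 1$ holds unconditionally. There is no real obstacle here: the entire argument is a direct unpacking of the first branch of the definition together with the observation that $\lfloor 1/(1-x) \rfloor = 1$ whenever $x<1/2$.
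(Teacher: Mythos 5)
Your proposal is correct and follows essentially the same route as the paper: read off the first branch of the definition for linearity and the image, use $x<1/2$ to get $\lfloor 1/(1-x)\rfloor=1$ and hence $T(1-x)=x/(1-x)$, and reciprocate to obtain $1/T(1-x)=1/x-1$, from which the floor identity follows. The only difference is that you spell out the final floor computation, which the paper leaves as ``follows from reciprocating the above.''
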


So in the event that $x<1/2$ and $y \notin [1-x,1]$ (\textit{e.g.} exactly the points in $[0,1]^2$ where $\hat{T}_{fast}$ is not just defined as $\hat{T}_{slow}$), \Cref{figure - slow map details} shows the action of $\hat{T}_{slow}$ on all $y \in [0,1-x]$. Specifically, if $1-(i+1)x < y < 1-ix$, then $1-ix_s < y_s<1-(i-1)x_s$, where $\hat{T}_{slow}(x,y) = (x_s,y_s)$.

\begin{figure}[bht]
\center{\begin{tikzpicture}[xscale=15]
\draw[|-|] (0,1)--(.05,1);
\draw[|-|] (.05,1)--(.15,1);
\draw[|-|] (.15,1)--(.25,1);
\draw[|-|] (.25,1)--(.7,1);
\draw[|-|] (.7,1)--(.8,1);
\draw[|-|] (.8,1)--(.9,1);
\draw[|-|] (.9,1)--(1,1);
\draw[|-|] (0,0)--(.08,0);
\draw[|-|] (.08,0)--(.2,0);
\draw[|-|] (.2,0)--(.32,0);
\draw[|-|] (.32,0)--(.76,0);
\draw[|-|] (.76,0)--(.88,0);
\draw[|-|] (.88,0)--(1,0);
\draw[dotted] (0,1)--(0,0);
\draw[dotted] (.05,1)--(.08,0);
\draw[dotted] (.15,1)--(.2,0);
\draw[dotted] (.25,1)--(.32,0);
\draw[dotted] (.7,1)--(.76,0);
\draw[dotted] (.8,1)--(.88,0);
\draw[dotted] (.9,1)--(1,0);
\node[left] at (0,1){$0$};
\node[above] at (.05,1){$1-ax$};
\node[above] at (.15,1.4){$1-(a-1)x$};
\node[above] at (.25,1){$1-(a-2)x$};
\node[above] at (.7,1){$1-3x$};
\node[above] at (.8,1){$1-2x$};
\node[above] at (.9,1){$1-x$};
\node[right] at (1,1){$1$};
\node[left] at (0,0){$0$};
\node[below] at (.08,0){$1-(a-1)x_s$};
\node[below] at (.2,-.4){$1-(a-2)x_s$};
\node[below] at (.32,0){$1-(a-3)x_s$};
\node[below] at (.76,0){$1-2x_s$};
\node[below] at (.88,0){$1-x_s$};
\node[right] at (1,0){$1$};
\end{tikzpicture}
}

\caption{\label{figure - slow map details}The action of $\hat{T}_{slow}$ on the second coordinate when $x<\frac{1}{2}$ and $y\in [0,1-x]$. Here $a=\floor{1/x} \geq 2$ and $x_s=T(1-x)$.}
\end{figure}
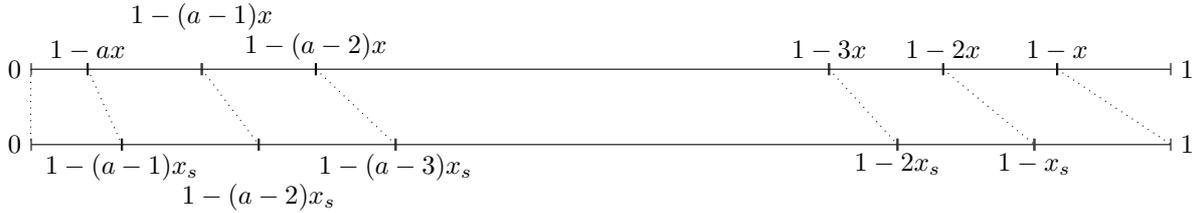

\begin{corollary}
For $x<1/2$ and $y<1-x$, partition $[0,1]$ with the points $1-ix$ for $i=1,2,\ldots,a$, where $a = \floor{1/x}$. For each $i = 1, 2,\ldots,(a-1)$, $\hat{T}_{fast}$ maps $\{x\} \times [1-(i+1)x,1-ix]$ linearly in its second coordinate onto $\{T(x)\}\times [0,1]$ with a reversal of orientation in the second coordinate. Also, $\hat{T}_{fast}$ maps $\{x\} \times [0,1-ax]$ linearly in its second coordinate onto $\{T^2(x)\} \times [0,1]$ with no reversal of orientation in the second coordinate. 
\label{corollary - basis of second graph}
\begin{proof}
The proof follows from viewing iterations of $\hat{T}_{slow}$ in light of \Cref{lemma - slow T acting on interval}. If $y \in [1-(i+1)x,1-ix]$, then after $i$ applications of $\hat{T}_{slow}$ ``following the red edge" (in the presentation of \Cref{figure - graph presentation}), we will arrive into $y$ belonging to the top-most interval, i.e. eventually $y \geq 1-x$, and we will ``follow the green edge." The assumption that $y \geq 1-ax$ ensures that every time we follow the red edge we do so with $\floor{1/(1-x)}=1$, and furthermore our rotation after applying $\hat{T}_{slow}$ exactly $i$ consecutive such times will be given by
\[x^* = \frac{1}{(a-i)+T(x)},\]
and in following the green edge we will now have rotation by $T(x^*)=T(x)$. 

The situation for $y \in [0,1-ax]$ is similar, except that we only ``follow the red edge." After $a-1$ iterations of $\hat{T}_{slow}$, the rotation amount is given by
\[ x^*=\frac{1}{a-(a-1)+T(x)} = \frac{1}{1+T(x)},\]
and then in the final step we compute
\begin{align*}
    1-x^* &= \frac{T(x)}{1+T(x)}\\
    \frac{1}{1-x^*}&= 1+ \frac{1}{T(x)}\\
    T(1-x^*)&=T(T(x)).
\end{align*}
The comments regarding orientation are simply due to considering how many times $\hat{T}_{slow}$ would involve a reversal of orientation: exactly once for those $y \geq 1-ax$ (computing $\hat{T}_{fast}$ will terminate in one application of a ``green edge"), and none for those $y \leq 1-ax$ ($\hat{T}_{fast}$ will instead terminate with a ``red edge with $\hat{a} \geq 2$").
\end{proof}
\end{corollary}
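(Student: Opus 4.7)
The plan is to compute $\hat{T}_{fast}(x,y)$ by iterating $\hat{T}_{slow}$ and tracking two things simultaneously: the rotation amount $x_k$ and the level of $y_k$ within the partition $\{1-jx_k\}$ described in \Cref{figure - slow map details}. By the definition of $\hat{T}_{fast}$, I iterate $\hat{T}_{slow}$ until the orbit lands in $F$, then apply one more step. The dynamics for $x<1/2$ and $y<1-x$ are governed by the red edge with $\hat{a}=1$ (by \lref{lemma - slow T acting on interval}), so the question reduces to determining exactly when this streak of red edges terminates and what the accumulated linear action on $y$ is at that moment.

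First I would work out the rotation-amount dynamics. Writing $1/x = a+T(x)$ and using $x_{k+1}=x_k/(1-x_k)$ from \lref{lemma - slow T acting on interval}, an easy induction gives $x_k = 1/((a-k)+T(x))$, so in particular $\lfloor 1/x_k\rfloor = a-k$ and $T(x_k)=T(x)$. The condition $x_k<1/2$ that keeps the next edge red with $\hat{a}=1$ is equivalent to $a-k\geq 2$, so the streak is permitted for $k=0,1,\ldots,a-2$. For the second coordinate, \Cref{figure - slow map details} shows that a point in level $j$ of the partition (i.e.\ in $[1-(j+1)x_k,1-jx_k]$) moves to level $j-1$ after one red edge. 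This is the core bookkeeping and the main place I expect to have to be careful: once the rotation and level recursions are aligned, everything else is mechanical.

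For $y_0\in[1-(i+1)x,1-ix]$ with $1\leq i\leq a-1$, the level recursion sends $y_0$ through levels $i,i-1,\ldots,1,0$ in exactly $i$ red edges, landing $y_i\in[1-x_i,1]\subset F$. Then $\hat{T}_{fast}$ applies one green edge, producing $x_{i+1}=T(x_i)=T(x)$ and the orientation-reversing linear bijection $y\mapsto (1-y)/x_i$ of $[1-x_i,1]$ onto $[0,1]$. Composing the $i$ orientation-preserving red-edge dilations with this single reversal gives the claimed linear, orientation-reversing bijection $\{x\}\times[1-(i+1)x,1-ix]\to\{T(x)\}\times[0,1]$.

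For $y_0\in[0,1-ax]$, I would use the identity $1-ax=T(x)\cdot x$ (a rearrangement of $1/x=a+T(x)$) to check inductively that the bottom sub-interval maps to the bottom sub-interval at every red edge, so $y_k\in[0,1-(a-k)x_k]$ for all $k\leq a-1$. After $a-1$ red edges we have $x_{a-1}=1/(1+T(x))>1/2$, which enters $F$ via the alternative $x>1/2$; one more red edge is then applied. The computation $1/(1-x_{a-1})=1+1/T(x)$ yields $x_a=T(1-x_{a-1})=T^2(x)$, while $y\mapsto y/(1-x_{a-1})$ is an orientation-preserving linear bijection of $[0,1-x_{a-1}]$ onto $[0,1]$. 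All the intermediate red-edge maps are orientation-preserving dilations, so the total is orientation-preserving, completing the claim.
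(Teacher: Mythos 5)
Your proof is correct and follows essentially the same route as the paper's: iterating the red edge via \lref{lemma - slow T acting on interval} to get $x_k = 1/((a-k)+T(x))$, tracking the level of $y$ in the partition of \Cref{figure - slow map details}, and terminating with either one green edge (giving $T(x)$ and the single orientation reversal) or one red edge with $\hat a\geq 2$ (giving $T^2(x)$ with no reversal). Your added checks (the identity $1-ax = xT(x)$ for the bottom sub-interval and the verification that $x_{a-1}>1/2$) are just explicit versions of details the paper leaves implicit.
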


\Cref{corollary - basis of second graph} has another interpretation: after partitioning $[0,1]$ using the points $1-x,1-2x,\ldots,1-ax$ (where $a=\floor{1/x}$ regardless of $y$), the map $\hat{T}_{fast}$ can be considered to simply be the first return map on the interval of this partition containing $y$. When this interval is of length $x$, the induced rotation will be of length $T(x)$. When this interval is of length $1-ax$, the induced rotation will be of length $T^2(x)$. Then, $y$ is linearly scaled accordingly within this interval, reversing orientation when the interval is of length $x$, but not when the interval is of length $1-ax$. Altogether, then, we have the following:
\begin{equation}\label{eqn:T-hat-fast}
\hat{T}_{fast}(x,y) = \begin{cases} \left(T(x), \frac{1-y}{x} \mod 1 \right) & y \geq 1-\floor{1/x}x, \\
\left(T^2 (x), \frac{y}{1-\floor{1/x}x} \right) & y \leq 1-\floor{1/x}x . \end{cases}\end{equation}
We construct a graph presentation similar to \Cref{figure - graph presentation} for the action of $\hat{T}_{fast}$ in \Cref{figure - second graph presentation}.

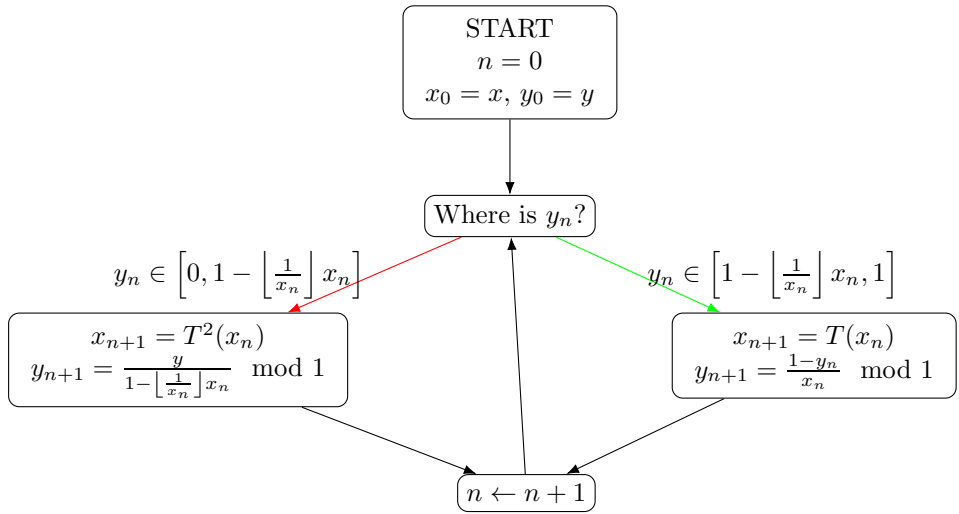
\begin{figure}[bth]
\center{\begin{tikzpicture}[xscale=1]
\node(start)[draw,rectangle, rounded corners]{$\begin{array}{c}\textrm{START}\\n=0\\x_0=x, \, y_0=y\end{array}$};
\node(set a)[below = of start, draw, rectangle, rounded corners]{Where is $y_n$?};
\node(regular)[below right = of set a, draw, rectangle, rounded corners]{$\begin{array}{c} x_{n+1} = T(x_n)\\y_{n+1} = \frac{1-y_n}{x_n} \mod 1 
\end{array}$};
\node(reversing)[below left = of set a, draw, rectangle, rounded corners]{$\begin{array}{c} x_{n+1} = T^2(x_n)\\y_{n+1} = \frac{y}{1-\floor{\frac{1}{x_n}}x_n} \mod 1
\end{array}$};
\node(increment)[below left = of regular,draw, rectangle, rounded corners]{$n \gets n+1$};
\draw[-Latex] (start) -- (set a);
\draw[-Latex,green] (set a) -- (regular) node[black,midway,right]{$y_n \in \left[1-\floor{\frac{1}{x_n}}x_n,1\right]$};
\draw[-Latex,red] (set a) -- (reversing) node[black,midway,left]{$y_n \in \left[0,1-\floor{\frac{1}{x_n}}x_n\right]$};
\draw[-Latex] (regular) -- (increment);
\draw[-Latex] (reversing) -- (increment);
\draw[-Latex] (increment) -- (set a);
\end{tikzpicture}}
\caption{\label{figure - second graph presentation}The graph version of $\hat{T}_{fast}$.
}
\end{figure}

We now describe a natural Markov partition for $\hat{T}_{fast}$. Since $\hat{T}_{fast}$ acts the same as $\hat{T}_{slow}$ on $F$, we use the same trapezoids in this region as we did for $\hat{T}_{slow}$. Specifically, we notate them as $G_n$ and $R_n$ as trapezoids bounded by the given lines:
\begin{align*}
 G_n: &\left\{ \frac{1}{n+1} \leq x \leq \frac{1}{n}, \quad 1-x \leq y \leq 1\right\}, \qquad n \geq 1,\\
 R_n: &\left\{1-\frac{1}{n+1} \leq x \leq 1-\frac{1}{n+2}, \quad 0 \leq y \leq 1-x\right\}, \qquad n \geq 1.
\end{align*}

In \Cref{figure - graph presentation} the trapezoids $G_n$ correspond to ``follow the green edge with $\hat{a}_{k+1}=n$", while $R_n$ correspond to ``follow the red edge with $\hat{a}_{k+1}=n+1$." The index $k+1$ here refers to a generic ``next index" and is non-specific. Also, for all $x \in G_n$, the standard continued fraction representation of $x$ begins with $x=[n,\ldots]$, while for $x \in R_n$ the standard continued fraction representation begins with $x=[1,n,\ldots]$. For convenience, denote $G=\cup_i G_i$ and $R=\cup_i R_i$: $G$ is the triangle bounded by $y=1$, $x=1$, and $y=1-x$, while $R$ is the triangle bounded by $x=1/2$, $y=0$, and $y=1-x$. 
The remaining trapezoid will be partitioned in the following way
\begin{align*}
 PG_{i,n}: &\left\{ \frac{1}{n+1} \leq x \leq \frac{1}{n}, 1-(i+1)x \leq y \leq 1-ix\right\}, \qquad n \geq 2, \quad i=1,\ldots, n-1,\\
 T_n: &\left\{\frac{1}{n+1} \leq x \leq \frac{1}{n}, \quad 0 \leq  y \leq 1-nx\right\}, \qquad n \geq 2.
\end{align*}
For consistent labeling, we also let $PG_{0,n}=G_n$ and $T_1=R$.

Directly from \Cref{lemma - slow T acting on interval} we get that the map $\hat{T}_{slow}$ acts on the trapezoids $PG_{i,n}$ as follows:
\begin{itemize}
\item For $i=1,2,\ldots,n-1$ and $n\geq 2$, $\hat{T}_{slow}: PG_{i,n} \mapsto PG_{i-1,n-1}$.
\item For $n \geq 2$, $\hat{T}_{slow}: T_n \mapsto T_{n-1}$.
\end{itemize}
All of the maps as written are $1:1$.

So we see that each of the trapezoids $PG_{i,n}$ will orbit under $\hat{T}_{slow}$ into $G_{n-i}$ after exactly $i$ applications of $\hat{T}_{slow}$, where $i \leq n-1$. If we update our notation to set $G_n = PG_{0,n}$, then we have the trapezoids $PG_{i,n}$, with $0 \leq i < n$, are acted on by $\hat{T}_{fast}$ by $i$ applications of $\hat{T}_{slow}$ of ``red edge with $\hat{a}_{k+1}=1$" followed by a single instance of ``green edge with $\hat{a}_{k+1}=(n-i)$." The notation ``$PG$" for trapezoids refers to them being ``pre-green." 

In contrast, the triangles $T_n$ will map into $R$ after $n-1$ applications of $\hat{T}_{slow}$, at which point $\hat{T}_{slow}$ acts as ``red edge, but with $\hat{a}_{k+1} \geq 2$". So we accordingly partition each $T_n$ into which trapezoid $R_n$ it will orbit into. More specifically, let the trapezoids $PR_{i,n}$ be bounded by the lines
\[ 
PR_{i,n}: \left\{ 
\frac{i}{in+1} \leq  x \leq  \frac{i+1}{(i+1)n+1}, \quad 0 \leq y \leq 1-nx. \right\} \qquad n \geq 2,\quad i=1,\ldots, n-1. 
\]

Note that $\hat{T}_{slow}: PR_{i,n} \mapsto PR_{i,n-1}$, which is consistent if we relabel our original $R_i = PR_{i,0}$. The trapezoid $PR_{i,n}$ is acted on by $\hat{T}_{slow}$ as ``$n-1$ red edges with $\hat{a}_{k+1}=1$, followed by a single red edge with $\hat{a}_{k+1}=i+1$." These trapezoids are accordingly labeled as ``pre-red." In summary:

\begin{itemize}
    \item The trapezoid $PG_{i,n}$ refers to a region in which $\hat{T}_{fast}$ is given by $i$ iterations of $\hat{T}_{slow}$ with ``red edge, $\hat{a}_{k+1}=1$" followed by one iteration of $\hat{T}_{slow}$ with ``green edge, $\hat{a}_{k+1}=n-i$." Here $0 \leq i \leq n-1$.
    \item In $PG_{i,n}$, $n$ is the first partial quotient of $x$, while $i$ refers to information about the $y$ coordinate: $1-(i+1)x \leq y \leq 1-ix$.
    \item In contrast, the trapezoid $PR_{i,n}$ refers to a region in which $\hat{T}_{fast}$ is given by $n-1$ iterations of $\hat{T}_{slow}$ with ``red edge, $\hat{a}_{k+1}=1$" followed by one iteration of $\hat{T}_{slow}$ with ``red edge, $\hat{a}_{k+1} \geq 2$." Here $1 \leq i < \infty$.
    \item In $PR_{i,n}$, the $x$-coordinates are defined by $i/(ni+1) \leq x \leq (i+1)/(n(i+1)+1)$; the first two partial quotients of $x$ are $n,i$, while $y$ must satisfy $y \leq 1-nx$.
\end{itemize}

See \Cref{figure - markov partition for fast map} to see this Markov partition; arrows show how successive applications of $\hat{T}_{slow}$ eventually map trapezoids into either $G$ or $R$, where they are mapped bijectively back to the unit square. So $\hat{T}_{fast}$ bijectively maps each of these trapezoids to the unit square. The green trapezoids account for those $y$ which are not ``very small" by partitioning the $x$ into cylinders of length one for the standard continued fraction expansion, while the red trapezoids account for those smaller values of $y$, but at the expense of partitioning the $x$ into cylinders of length two for the standard continued fraction expansion.

\begin{figure}[bht]
\center{\begin{tikzpicture}[scale=10]
\draw (0,0)--(1,0)--(1,1)--(0,1)--(0,0);
\draw [fill=red] (1/2,0)--(1,0)--(1/2,1/2)--(1/2,0);
\draw [fill=green] (0,1)--(1,1)--(1,0)--(0,1);
\foreach \j in {2,...,15}{%
	\draw [fill=lime](1/\j,0)--(1/\j,1-1/\j)--({1/(\j+1)},{1-1/(\j+1)})--({1/(\j+1)},{1/(\j+1)})--(1/\j,0);}
\foreach \j in {2,...,15}{%
	\draw [fill=pink]({1/(\j+1)},0)--(1/\j,0)--({1/(\j+1)},{1/(\j+1)})--({1/(\j+1)},0);
	\draw ({1/(\j+1)},0)--({1/(\j+1)},1);
	\foreach \k in {1,...,15}{%
	\draw ({((\k)/(\j*\k+1)},0)--({((\k)/(\j*\k+1)},{((1)/(\j*\k+1)});
	}
	\draw (0,1)--(1/\j,0);
	}
\foreach \l in {1,...,15}{%
	\draw ({\l/(\l+1)},0)--({\l/(\l+1)},{1/(\l+1)});
	}
\draw (1/2,0)--(1/2,1);
\foreach \x in {1,...,5}{
	\foreach \y in {0,...,\x}{
		\draw[dotted, -stealth] ({(4*(\x+1)+1)/(4*(\x+1)*(\x+2))},{(2*(\x+1)*\y+1)/(2*(\x+1)*(\x+2))})--({(4*\x+1)/(4*\x*(\x+1))},{(2*\x*\y+1)/(2*\x*(\x+1))});
		}}
\node at(.5, 1.05){$G=\cup_n PG_{0,n}$};
\node at(.75,.75){$PG_{0,1}$};
\node at(.4,.80){$PG_{0,2}$};
\node at(.4,.5){$PG_{1,2}$};
\node at(.29,.85){\small $PG_{0,3}$};
\node at(.29,.6){\small $PG_{1,3}$};
\node at(.29,.35){\small $PG_{2,3}$};
\node at(.3,-.05){$T_3=\cup_i PR_{i,3}$};
\node at(.4,-.1){$T_2=\cup_i PR_{i,2}$};
\node at(.75,-.05){$R=\cup_i PR_{i,1}$};
\node at(.6,.3){$PR_{1,1}$};
\node at(.71,.22){\footnotesize $PR_{2,1}$};
\node at(.78,.16){\scriptsize $PR_{3,1}$};
\end{tikzpicture}
\caption{\label{figure - markov partition for fast map}A Markov partition for $\hat{T}_{fast}$ showing the action of $\hat{T}_{slow}$.}}
\end{figure}
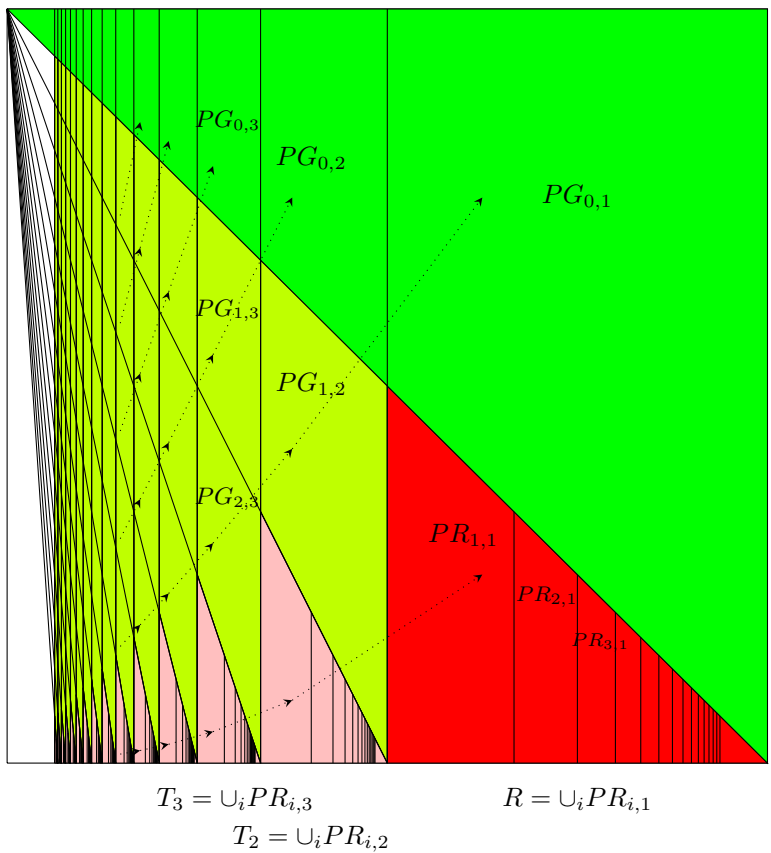

We now present the central claim of this section:
\begin{theorem}
There is a probability measure $\hat{\mu}$ on the space $[0,1]^2$, mutually absolutely continuous with respect to Lebesgue measure $\mu$, for which the map $\hat{T}_{fast}$ is ergodic.
\label{theorem - T_fast is ergodic}
\begin{proof}
Recall that formally the second coordinate contains left/right separation of points $y=nx$; we take $\mu$ to be the natural extension of Lebesgue measure to this space. In order to prove our theorem we will show that the conditions in \cite[Theorem 1.1]{1078-0947_2005_4_639} are satisfied. First, we note that our system
\[ \left([0,1]^2,\hat{T}_{fast},\mu \right)\]
is a \emph{tower system}. The `base' of our tower system is the space $[0,1]^2$, with all points having return time one (i.e. the towers are all of height one, as $\hat{T}_{fast}$ maps this space to itself).
There are four conditions to verify, and we provide the terminology of our reference for easy comparison, though we do not necessarily redefine all terms herein:\\
\emph{First: ``summability of upper floors"}

With the return time $R(x,y)=1$ for all $(x,y)$ we immediately verify
\begin{align*}
    \sum_{\ell \in \mathbb{N}} \mu \left( \left\{ (x,y) \in [0,1]^2  \quad | \quad R(x,y) > \ell \right\} \right) &=\mu \left\{ (x,y) \in [0,1]^2 \quad | \quad R(x,y) =1 \right\}+0\\
    &=\mu([0,1]^2)\\
    &= 1.
\end{align*} 
\emph{Second: the areas $PG_{i,n}$, $PR_{i,n}$ form a generating partition $\mathcal{R} $.}

This condition follows from a verification that some power of $\hat{T}_{fast}$ is \emph{uniformly expanding} (has norm bounded below by a number larger than one). We revisit \Cref{eqn:T-hat-fast} as:
\begin{equation}\label{eqn:T-hat-fast2}
\hat{T}_{fast}(x,y) = 
\begin{cases} \left( \frac{1}{x} - n, \frac{1-y}{x} - i\right) &(x,y) \in PG_{i,n}, \\[0.5em]
\left(\frac{x(ni+1)-i}{1-nx}, \frac{y}{1-nx}\right)&(x,y) \in PR_{i,n},  
\end{cases}
\end{equation}
from which we can more easily compute the Jacobian $D\hat{T}_{fast}$:
\[
D\hat{T}_{fast}(x,y)  = 
\begin{cases} \left[\begin{array}{cc} - \frac{1}{x^2} & 0 \\ -\frac{(1-y)}{x^2} & - \frac{1}{x} \end{array}\right] & (x,y) \in PG_{i,n}, \\[10pt]
\left[ \begin{array}{cc} \frac{1}{(1-nx)^2} & 0 \\ \frac{ny}{(1-nx)^2} & \frac{1}{(1-nx)} \end{array}\right] & (x,y) \in PR_{i,n}. 
\end{cases}
\]

As these matrices are triangular with distinct real numbers along the diagonal, the smallest modulus of an eigenvalue is easy to compute as either $1/x$ or $1/(1-nx)$. The latter is bounded away from one, so $\hat{T}_{fast}$ is uniformly expanding on $PR_{i,n}$. On $PG_{i,n}$, however, this modulus is not bounded away from one. Also note that in both cases there are two distinct real eigenvalues, both of which are larger than one in absolute value; the map can never \emph{contract} anywhere.

But we need only verify that \emph{some power} of $\hat{T}_{fast}$ is uniformly expanding, so we consider $\hat{T}^2_{fast}$. We have already verified that $\hat{T}_{fast}^2$ is uniformly expanding on any $PR_{i,n}$ or $\hat{T}_{fast}^{-1} PR_{i,n}$ (it is uniformly expanding on $PR_{i,n}$ and nowhere-contracting in the $PG_{i,n}$), so consider instead only the areas $PG_{i_1,n_1} \cap \hat{T}^{-1}_{fast}\left(PG_{i_2,n_2}\right)$. For such $(x,y)$, the continued fraction expansion of $x$ begins $x=[n_1,n_2,\ldots]$, and via the chain rule:
\begin{align*}
 D\left(\hat{T}_{fast}^2(x,y)\right) &= D\hat{T}_{fast}(x,y) \cdot D\hat{T}_{fast} \left( \hat{T}_{fast}(x,y)\right)\\
&= \left[\begin{array}{cc}-\frac{1}{x^2} &0 \\ \star &-\frac{1}{x}  \end{array} \right] \cdot \left[\begin{array}{cc} - \frac{1}{T(x)^2} &0 \\ \star & \frac{1}{T(x)} \end{array}\right]\\
&= \left[ \begin{array}{cc}\frac{1}{(xT(x))^2} &0 \\ \star & \frac{1}{xT(x)} \end{array}\right].
\end{align*}
We did not explicitly compute the lower-left entries because they are not relevant for computing the norm of these triangular matrices: in these regions we now see that the smaller modulus of an eigenvalue is given by $1/(x T(x)) > 2$. Therefore $\hat{T}^2_{fast}$ is uniformly expanding, and our partition generates the topology of our space.\\
\emph{Third: summable variation}

The third condition is that
\[ \sum_{n \in \mathbb{N}} \omega_n < \infty,\]
where $\omega_n$ is given by
\[
\omega_n= \sup_{C\in\mathcal{R}^n} \, \sup_{(x_1,y_1),(x_2,y_2) \in C} \log \frac{\det D\hat{T}_{fast}(x_1,y_1)}{\det D\hat{T}_{fast}(x_2,y_2)},
\]
with $\mathcal{R}^n=\bigvee_{i=0}^{n-1}\hat{T}^{-i}_{fast}(\mathcal{R})$. We see that the determinant is given by
\[
\det \left(D\hat{T}_{fast}(x,y)\right) = \begin{cases}x^{-3} & (x,y) \in PG_{i,n}, \\
(xT(x))^{-3} & (x,y) \in PR_{i,n}. \end{cases}
\]

Observe that this determinant is independent of $y$. Furthermore, the areas $PG_{i,n}$ are defined through a length one cylinder in the standard continued expansion of $x$, and then $x$ is mapped to $T(x)$. The $PR_{i,n}$ are defined through a length two cylinder in the same way, where $x$ is mapped to $T^2(x)$. Therefore the cylinders of $\hat{T}^n_{fast}$ are bounded in the $x$-coordinate by length $k$ cylinders in the standard continued fraction expansion, where $n \leq k \leq 2n$, depending on how many times we applied $T$ versus $T^2$. 

So our $\omega_n$ are in fact a supremum over all $(a_1,a_2,\ldots,a_k)\in \mathbb{Z}^k$, over all $x_1,x_2$ whose continued fraction expansion begins with $[a_1,a_2,\ldots,a_k]$, of the quantity
\[ \max\left\{3\log \left( \frac{x_1}{x_2}\right),3\log \left( \frac{x_1T(x_1)}{x_2T(x_2)}\right)\right\}.\]
If $x_1,x_2$ belong to the same cylinder of length $k$, then $T(x_1),T(x_2)$ belong to the same cylinder of length $k-1$. Therefore it suffices to show summability only when finding the supremum of $\log \left( \frac{x_1}{x_2}\right)$. Furthermore, the supremum is found by taking $x_1$, $x_2$ to be the endpoints of the cylinders. The distance between the endpoints is given by $\frac{1}{q_k(q_k+q_{k-1})}$ where $q_k$ is the denominator of $[a_1,\ldots, a_k]$. So the supremum occurs for the cylinders with smallest denominators, \textit{i.e.} $a_1=a_2=\cdots=a_k=1$. 
We let $\varphi=[1,1,1,\ldots]$ so that $1/\varphi$ is the  golden mean. We find $x_1 = F_k/F_{k+1}$ and $x_2=F_{k+1}/F_{k+2}$ (for $k$ odd; otherwise switch the choice of $x_1$, $x_2$). Here $(F_k)_{k\in \mathbb{N}}$ is the Fibonacci sequence. The ratio is largest when $k$ is smallest, and since $n \leq k \leq 2n$, we let $k=n$. Finally, we have 
\[
\left| x_1 - \varphi\right| < \frac{1}{F_{n+1}^2}, \qquad  \left| x_2 - \varphi \right| < \frac{1}{F_{n+2}^2}.
\]

Regardless of whether $k$ is even or odd, from the above we may derive that for some $C<1$ (specifically for any $1>C> \varphi$) and for all sufficiently large $n$  we have
\[ \left| \frac{x_1}{x_2} - 1\right|< C^n, \qquad 1 < \frac{x_1}{x_2} < 1 + C^n, \qquad 0< \log \left( \frac{x_1}{x_2}\right) <C^n\] from which summability of the $\omega_n$ follows.\\
\emph{Fourth: Large Image Properties}

The fourth and final condition of \cite[Theorem 1.1]{1078-0947_2005_4_639} is that for each element of our partition, the image under $\hat{T}_{fast}$ is comprised of elements of our partition, and that the infimum of the measure of these images is positive. Since $\hat{T}_{fast}$ maps each element of the partition to the entire space $[0,1]^2$, this condition is trivially satisfied.
\end{proof}
\end{theorem}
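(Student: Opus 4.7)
The plan is to verify that $\hat{T}_{fast}$, together with the Markov partition $\{PG_{i,n},PR_{i,n}\}$ just constructed, falls within the scope of a standard ergodicity theorem for piecewise-expanding Markov maps with countably many branches (for example the one of \cite{1078-0947_2005_4_639}). Such theorems typically require four things: a base system with controlled return times (here every piece already maps surjectively to $[0,1]^2$ in one step, so this is immediate), generation by the partition via some uniformly expanding iterate, summable distortion control on the Jacobian, and a large-image condition (trivial, since each cylinder is mapped onto the full square).

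For the expansion condition I would compute the Jacobian from \eqref{eqn:T-hat-fast} directly. On each piece the derivative matrix is lower triangular, so the relevant eigenvalues are the diagonal entries: on $PR_{i,n}$ the smaller factor is $1/(1-nx)$, which is bounded away from $1$ on each trapezoid in a way that does not degenerate, while on $PG_{i,n}$ the smaller factor is $1/x$, which approaches $1$ as $x\to 1$. Thus $\hat{T}_{fast}$ itself is not uniformly expanding, which is expected since the indifferent fixed points live in $G$. The fix is to pass to $\hat{T}_{fast}^2$: on a two-step cylinder contained in $PG_{i_1,n_1}\cap \hat{T}_{fast}^{-1}(PG_{i_2,n_2})$ the chain rule produces the factor
\[\frac{1}{x\,T(x)}> \frac{1}{\varphi^2}=\frac{2}{3-\sqrt{5}}>2,\]
using that $x\,T(x)$ attains its maximum $\varphi^2$ at the golden-mean fixed point $x=\varphi$. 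Combined with the case analysis on $PR_{i,n}$ (already uniformly expanding after one step) this gives uniform expansion for $\hat{T}_{fast}^2$, which in turn forces the partition to generate.

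For distortion, the key observation is that $\det D\hat{T}_{fast}(x,y)$ equals $x^{-3}$ on $PG_{i,n}$ and $(xT(x))^{-3}$ on $PR_{i,n}$, and in particular is independent of $y$. A cylinder of depth $n$ for $\hat{T}_{fast}$ corresponds in the $x$-coordinate to a regular continued fraction cylinder of depth between $n$ and $2n$, so to bound the oscillation of $\log|\det D\hat{T}_{fast}|$ on such a cylinder it suffices to bound the ratio of its endpoints. The slowest-shrinking regular CF cylinder of depth $k$ is the all-ones cylinder, whose endpoints are consecutive Fibonacci ratios $F_k/F_{k+1}$ and $F_{k+1}/F_{k+2}$ converging to $\varphi$ at rate $F_{k+1}^{-2}$; this yields $\omega_n = O(\varphi^{2n})$ and hence $\sum_n \omega_n < \infty$.

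The main obstacle is the expansion step: one must recognise that the apparent non-expansion on the $PG$-pieces is harmless because any orbit under $\hat{T}_{fast}$ that stays in $PG$ for two consecutive steps picks up both the factor $1/x$ and the factor $1/T(x)$, whose product is uniformly bounded below by $1/\varphi^2$. Once this is in hand, the other three hypotheses are routine consequences of the explicit Markov structure already established, and the cited theorem produces the desired absolutely continuous invariant probability measure together with ergodicity.
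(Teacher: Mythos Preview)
Your proposal follows essentially the same four-step verification as the paper's proof, via the same cited theorem. One small numerical slip: the supremum of $xT(x)$ over irrational $x\in(0,1)$ is $1/2$ (approached as $x\to 1/2^+$, since $xT(x)=1-x\lfloor 1/x\rfloor$), not $\varphi^2\approx 0.382$; the golden ratio is merely the fixed point of $T$, not the maximiser of $xT(x)$. Your displayed inequality $1/(xT(x))>1/\varphi^2$ is therefore too strong, but the weaker bound $1/(xT(x))>2$ that the paper uses still holds and is all that is needed for uniform expansion of $\hat{T}_{fast}^2$.
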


The theorem we cite here also provides estimates on the rate of mixing for this preserved measure: as the $\omega_n$ are exponentially decaying, the mixing rate of the system is exponential.

\begin{corollary}\label{corollary - T_slow not ergodic}
The map $\hat{T}_{slow}$ does not preserve any absolutely continuous (with respect to Lebesgue) probability measures with bounded Radon-Nikodym derivative. Rather, $\hat{T}_{slow}$ preserves a measure $\mu$ which is mutually absolutely continuous with respect to Lebesgue measure, but for which $\mu\left([0,1]^2 \right)=\infty$.
\begin{proof}
The results of \cite{1078-0947_2005_4_639} additionally show that the Radon-Nikodym derivative $d\nu/d(x,y)$, where $\nu$ is the invariant probability measure for $\hat{T}_{fast}$ which is absolutely continuous with respect to Lebesgue measure $d(x,y)$, is both bounded and bounded away from zero. There is an additional \textit{aperiodicity} requirement in that reference which refers to the partition elements; since each element of our partition maps to the entire square, this condition is met.

As there is a straightforward presentation of $\hat{T}_{slow}$ as a tower over $\hat{T}_{fast}$, we see then that if $\hat{T}_{slow}$ preserved such a measure, then the $\nu$ measure of the of the base would be proportional (according to the bounds on the derivative $d\mu/d(x,y)$) to the sum of the $\mu$-measures of the towers. Since our towers are of height $n$ over a base of area on the order of $1/n^2$, no such finite measure can exist. If we simply define a measure $\mu$, however, to be on each level of the tower the $\nu$-measure of the base, then we will have constructed an infinite (but still $\sigma$-finite) measure which is preserved by $\hat{T}_{slow}$.
\end{proof}
\end{corollary}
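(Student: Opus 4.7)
The plan is to realize $\hat{T}_{slow}$ as a Kakutani skyscraper over $(\hat{T}_{fast},\nu)$ with piecewise-constant roof function $R : [0,1]^2 \to \mathbb{Z}^+$ given by $R = i+1$ on $PG_{i,n}$ and $R = n$ on $PR_{i,n}$, so that $\hat{T}_{fast}(x,y) = \hat{T}_{slow}^{R(x,y)}(x,y)$ everywhere --- exactly the bookkeeping summarized in the bullet list describing the Markov partition for $\hat{T}_{fast}$. The second assertion of the corollary comes from lifting $\nu$ along the tower; the first follows once one knows the lift has infinite total mass and the lifted dynamics is ergodic.

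First I would define
\[
\mu(A) \;=\; \sum_{k=0}^{\infty} \nu\!\left(\, \{R > k\} \cap \hat{T}_{slow}^{-k}(A) \,\right)
\]
and verify $\mu \circ \hat{T}_{slow}^{-1} = \mu$ by a telescoping calculation: reindexing $j=k+1$ splits $\mu(\hat{T}_{slow}^{-1}(A))$ into $\sum_{j\ge 1}\nu(\{R=j\}\cap \hat{T}_{slow}^{-j}(A))$ plus $\sum_{j\ge 1}\nu(\{R>j\}\cap \hat{T}_{slow}^{-j}(A))$; the first sum collapses to $\nu(\hat{T}_{fast}^{-1}(A)) = \nu(A)$ via $\hat{T}_{fast}$-invariance from \Cref{theorem - T_fast is ergodic} together with the partition identity $\hat{T}_{fast}^{-1}(A) = \bigsqcup_{j\ge 1}\{R=j\}\cap \hat{T}_{slow}^{-j}(A)$, and matches the $k=0$ term of $\mu(A)$, while the second matches the remaining terms. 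The $k=0$ summand is $\nu$ itself, so $\mu$ inherits mutual absolute continuity with Lebesgue from the previous theorem.

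Next I would apply Kac's identity $\mu([0,1]^2) = \int R\, d\nu$. Since $d\nu/d\lambda$ is bounded above and away from zero (from the cited \cite[Theorem 1.1]{1078-0947_2005_4_639} together with our aperiodicity, each cell mapping onto the full square), this integral is comparable to $\int R\, d\lambda$, and the direct area estimate $\lambda(PG_{i,n}) \asymp 1/n^3$ yields
\[
\int R\, d\lambda \;\asymp\; \sum_{n\ge 1}\,\sum_{i=0}^{n-1} \frac{i+1}{n^3} \;\asymp\; \sum_{n\ge 1}\frac{1}{n} \;=\; \infty,
\]
while the $PR$ contribution converges and does not affect the conclusion. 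This gives the $\sigma$-finite, infinite, Lebesgue-equivalent invariant measure asserted by the corollary.

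To finish, I would lift ergodicity of $(\hat{T}_{fast},\nu)$ to ergodicity of $(\hat{T}_{slow},\mu)$ by the standard Kakutani theorem. Any $\hat{T}_{slow}$-invariant $\mu'$ absolutely continuous with respect to Lebesgue is then absolutely continuous with respect to $\mu$, and $d\mu'/d\mu$ is $\hat{T}_{slow}$-invariant as a function, hence $\mu$-a.e.\ constant by ergodicity. Therefore $\mu' = c\mu$ for some $c \geq 0$, which cannot be a probability measure since $\mu([0,1]^2) = \infty$; \emph{a fortiori} no such probability measure with bounded Radon--Nikodym derivative can exist. The main obstacle is the telescoping identity in the first step: $R$ is unbounded and the partition by $\{R=j\}$ is only countable, so justifying the interchange of sums needs a little care (a monotone-convergence argument handles it since all terms are nonnegative), but once that is in hand, everything else reduces to routine tower machinery.
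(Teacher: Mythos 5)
Your proposal is correct and follows the same underlying strategy as the paper: present $\hat{T}_{slow}$ as a tower (jump transformation) over $\hat{T}_{fast}$ with roof $R=i+1$ on $PG_{i,n}$ and $R=n$ on $PR_{i,n}$, lift $\nu$ level by level, and observe that the total mass $\int R\,d\nu \asymp \sum_{n}\sum_{i=0}^{n-1}(i+1)/n^3 \asymp \sum_n 1/n$ diverges. You supply details the paper leaves implicit (the telescoping verification of invariance of the lifted measure and the Kac identity), which is a genuine improvement in rigor. The one place you take a different route is the first assertion: the paper rules out bounded-density invariant probabilities directly from the divergence (a bounded density forces each tower level to carry mass comparable to its Lebesgue measure, and the sum of these diverges), whereas you rule out \emph{all} absolutely continuous invariant probabilities by invoking ergodicity of $(\hat{T}_{slow},\mu)$ and uniqueness-up-to-scalar of the a.c.\ invariant measure. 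That stronger conclusion is attainable, but it quietly uses conservativity of the infinite-measure system, both for lifting ergodicity through the jump transformation and for concluding that $d\mu'/d\mu$ is constant: for a non-invertible map the statement that this derivative is ``$\hat{T}_{slow}$-invariant as a function'' is really a fixed-point statement for the transfer operator and needs conservative ergodicity to yield constancy. Conservativity does hold here because the base carries a finite invariant measure, but it should be stated; if you only want the corollary as written, the paper's bounded-density argument bypasses ergodicity of $\hat{T}_{slow}$ entirely.
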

\section{Canonical Approximations}\label{section - symbolic coding}

As we successively constructed first return maps in \Cref{sec:TslowTfast} we repeatedly rescaled the resulting interval to be of length one; this renormalization allowed us to construct the map $\hat{T}_{fast}$ on the space $[0,1]^2$. In this section, however, we keep continued track of the corresponding shrinking intervals containing $y$ in the original space. We simultaneously develop substitutions which encode of the orbits of the endpoints of these intervals through their first returns.

It is clear from the iterative construction of first-return maps in \Cref{sec:TslowTfast} that we have a natural sequence of intervals $I_{slow}(k)$ which are nested ($I_{k+1} \subset I_k$) with intersection exactly $\{y\}$. Specifically, $I_{slow}(0)=[0,1]$, and then if $I_{slow}(k)$ is partitioned by the first pre-image of the origin which appears in the interior, $I_{slow}(k+1)$ is which of the two resulting intervals contains $y$; recall that for $y \in x \mathbb{Z}$ we have left- and right-sided versions of $y$, so this choice is always unique. These are therefore the intervals in the original circle $[0,1]$ on which the maps $\hat{T}_{slow}$ successively computed the first-return map. Observe trivially that both endpoints of $I_{slow}(k)$ are of the form $-nx$ for some non-negative integer $n$. We define those natural numbers $n$ so that $-nx$ is an endpoint of some $I_{slow}(k)$, listed in increasing order, to be the \textit{slow approximating sequence of $y$ with respect to $x$}. We formalize these notions below:

\begin{lemma}
If $\{n_k\}$ are the slow approximating sequence of $y$ with respect to $x$, then for all $k \in \mathbb{N}$, $-n_k x$ and $-n_{k-1}x$ are the endpoints of $I_{slow}(k)$ (under the convention that $n_{-1}=n_0=0$). If we let $\{a,b\}=\{n_k,n_{k+1}\}$ so that $0 \leq y+ ax < y+ bx \leq 1 \mod 1$, then either $y+n_{k+2}x < y+ax \mod 1$ or $y+bx < y+n_{k+2}x \mod 1$. Furthermore, $n_{k+2}$ is the least positive integer which satisfies either of those inequalities.
\begin{proof}
Both endpoints of $I_{slow}(0)=[0,1]$ are of the form $-0x$, so the base case of the first claim is immediate. Our construction of the intervals $I_{slow}(k)$ is such that the endpoints form the sequence $-n_k x$ which are successively the closest points in the \textit{backwards} orbit of the origin which are closest to $y$. If the points $-n_k \mod 1$ are the successively closest points to $y$, then $y+n_k \mod 1$ are the successively closest points to the origin.
\end{proof}
\end{lemma}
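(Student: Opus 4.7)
I would prove both claims together by induction on $k$. The base case $k=0$ is immediate: both endpoints of $I_{slow}(0)=[0,1]$ coincide with $-0\cdot x$ on the circle (viewed through the left- and right-sided conventions), agreeing with $n_{-1}=n_0=0$.

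For the inductive step on the first claim, suppose the endpoints of $I_{slow}(k)$ are $-n_{k-1}x$ and $-n_k x$. The interval $I_{slow}(k+1)$ is obtained by splitting $I_{slow}(k)$ at the first pre-image $-mx$ of the origin in its interior, and keeping the half containing $y$. I would first argue that any index $j<m$ fails to place $-jx$ in the current interior: such a $j$ either coincides with one of the two existing endpoint indices (hence sits on the boundary) or was eliminated from an earlier $I_{slow}(\cdot)$ (hence lies strictly outside the nested current interval). Minimality of $m$ thus identifies it with $n_{k+1}$, the next element of the sorted slow approximating sequence. The remaining step is to identify the preserved old endpoint of the new interval as $-n_k x$, which requires tracking the position of $y$ within the rescaled intervals produced by $\hat{T}_{slow}$ through the red/green branching pattern in \Cref{figure - graph presentation}.

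For the second claim, I would exploit that each $R_x^n$ is an isometry of the circle with $R_x^n(-nx)=0$ and $R_x^n(y)=y+nx$. Consequently, the condition ``$-nx$ lies in the interior of $I_{slow}(k+1)$'' transfers, via $R_x^n$, into a condition on the relative positions of $y+nx$, $0$, $y+n_k x$, and $y+n_{k+1}x$. Working out this translation, the condition becomes precisely that $y+nx$ lies in the short arc around $0$ bounded by $y+n_k x$ and $y+n_{k+1}x$---equivalently, $y+nx$ lies outside the long arc $[y+ax,y+bx]$ of the statement. Thus the smallest positive $n$ satisfying the stated dichotomy is exactly the index at which a new split point first enters the interior of $I_{slow}(k+1)$, namely $n_{k+2}$, yielding both the dichotomy and the minimality.

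The main obstacle is verifying the identification of the preserved endpoint in the first claim: establishing that the survivor at step $k+1$ is always $-n_k x$ rather than $-n_{k-1}x$. This appears to require a careful analysis of how the successive applications of $\hat{T}_{slow}$ align the position of $y$ with the most recently added endpoint, invoking the alternation structure implicit in the red/green branching pattern together with the orientation-preserving versus orientation-reversing nature of the two $\tau$-isomorphisms from \Cref{lemma - first return} and \eqref{eqn - alternate tau}.
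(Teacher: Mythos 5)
Your base case, your argument that minimality of the splitting index forces each newly created endpoint to carry the next term of the sorted sequence, and your transport of the second claim through the isometry $R_x^{n}$ are all sound, and are already more explicit than the paper's own proof (which is essentially a two-sentence restatement of the claim). The genuine gap is exactly the step you defer as ``the main obstacle'': you never establish that the endpoint of $I_{slow}(k)$ inherited from $I_{slow}(k-1)$ is $-n_{k-1}x$ rather than some older $-n_jx$. This is not a routine verification that was merely omitted; carrying out the orientation bookkeeping you propose shows that the identification \emph{fails} whenever the red branch is taken. Both isomorphisms \eqref{eqn - tau} and \eqref{eqn - alternate tau} send the newly created partition point to renormalized coordinate $1$ and the surviving old endpoint to coordinate $0$. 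At the next step the green branch retains $[1-x_k,1]$, so the inherited endpoint is the one sitting at coordinate $1$, i.e.\ the most recently created one, as desired; but the red branch retains $[0,1-x_k]$, so the inherited endpoint is the one at coordinate $0$, i.e.\ the endpoint that had \emph{already survived} the previous step, which for $k\geq 2$ is $-n_jx$ with $j\leq k-2$ and hence (for irrational $x$) is not $-n_{k-1}x$.

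Concretely, take $x$ irrational and slightly above $0.35$, $y=0.1$: the first two steps both follow the red edge, giving $I_{slow}(1)=[0,1-x]$ and $I_{slow}(2)=[0,1-2x]$, with endpoint index sets $\{0,1\}$ and $\{0,2\}$, so $n_1=1$ yet $-n_1x=1-x$ is not an endpoint of $I_{slow}(2)$. The first claim therefore holds at step $k$ only when the passage from $I_{slow}(k-1)$ to $I_{slow}(k)$ follows the green edge (or $k\leq 1$); in general the surviving endpoint is $-n_{g-1}x$ where $g\leq k$ indexes the most recent green step (with $g=1$ by convention if there is none). The second claim inherits the defect, since the arc around the origin bounded by $y+n_kx$ and $y+n_{k+1}x$ pulls back to the interior of $I_{slow}(k+1)$ only under the first claim. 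Note that the paper's own proof does not fill this gap either: it asserts the endpoints are ``successively the closest points'' of the backward orbit to $y$, which conflates the two current one-sided records (one on each side of $y$, which is what the endpoints actually are) with the two most recent records (which may lie on the same side). Your instinct that this step is the crux was exactly right; what is missing is the recognition that the step, as stated, is false, so the lemma must be reformulated before it can be proved.
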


There is therefore a subsequence of these intervals, $I_{fast}(k)$ which are the intervals on which the first return map is a scaled version of $\hat{T}_{fast}^k$; recall that 
\[\hat{T}_{fast}(x,y) = \begin{cases}\hat{T}_{slow}^{i+1}(x,y) & \left((x,y) \in PG_{i,n}\right), \\ \hat{T}_{slow}^{n}(x,y) & \left((x,y) \in PR_{i,n}\right). \end{cases} \]
So we construct a subsequence of $k$ beginning with $k_0=0$, and then
\[ k_{j+1} = \begin{cases}k_j+ i +1 & \left(\hat{T}_{fast}^j(x,y) \in PG_{i,n}\right), \\[3pt] k_j+n & \left( \hat{T}_{fast}^j(x,y) \in PR_{i,n}\right).\end{cases}\]
In this way, letting $I_{fast}(j) = I_{slow}(k_j)$, these are the intervals in the original system on which $\hat{T}_{fast}$ is constructing first-return maps.

We define those $n_{k_j}$ so that $-n_{k_j}x$ is an endpoint of some $I_{fast}(k_j)$ to be the \textit{fast approximating sequence of $y$ with respect to $x$}. These numbers, then, represent a ``sped-up" sequence of $n_k$ along which $y+n_{k_j}x \mod 1$ converges to the origin. What we have called the slow and fast approximating sequences of $y$ with respect to $x$ represent those points in the orbit of $y$ which are close to the origin: $-nx \mod 1$ is close to $y$, so $y+nx \mod 1$ is close to $0$ or $1$. What is more typical is to consider those points in the orbit of the origin which are close to $y$. In \cite{IN} these are called the \textit{canonical approximating sequence} for $y$ with respect to $x$. Our definition is similar but not identical; the difference amounts to approximating $y$ with points in the forward orbit of the origin (the canonical sequence) versus the backwards orbit of the origin (our fast approximating sequence). Therefore the fast approximating sequence for $y$ with respect to $x$ would be the canonical approximating sequence for $y$ with respect to $1-x$, and vice versa.

Let $\mathcal{A}=\{A,B\}$ and $\mathcal{A}^{*}$ be the free monoid on $\mathcal{A}$. Let the function $\Omega:[0,1]^2 \mapsto \mathcal{A}^{\mathbb{N}}$ be defined by
\[ \Omega(x,y)_n = \begin{cases} A & (y+nx \mod 1 \geq 1-x) \\ B & (y+nx \mod 1\leq 1-x) .\end{cases}\]
Then $\Omega(x,y)$ is called the \textit{symbolic encoding of the orbit of $y$ under rotation by $x$}, and the prefix $\Omega(x,y)_{[0,n)}$ is called the symbolic encoding \textit{of length $n$}. For a given pair of coordinates $(x,y)$, we will also simply refer to the \textit{symbolic coding of the pair $(x,y)$} in place of the symbolic coding of $y$ under rotation by $x$.

We define the homomorphisms $\sigma_x$, $\tau$ on $\mathcal{A}^*$ by:
\begin{align*}\label{eqn - basic sigmas}
\sigma_x(A) &= AB^a & \tau(A)&=B\\
\sigma_x(B) &= AB^{a-1} & \tau(B) &=A
\end{align*}
where $a= \floor{1/x}$ as before. These can be naturally extended to be homomorphisms on $\mathcal{A}^{\mathbb{N}}$. Then let
\begin{equation}\label{equation - slow sigma} \sigma_{slow}(x,y) = \begin{cases} \sigma_x & (y \geq 1-x) \\ \tau \circ \sigma_{1-x} & (y \leq 1-x).\end{cases}
\end{equation}
Representing any word in $\mathcal{A}^{*}$ with $n$ occurrences of $A$ and $m$ occurrences of $B$ by the column vector $[n, m]^{T}$, we see that the substitutions may be studied through their matrices $M_{slow}(x,y)$
\begin{equation}\label{eqn:slow matrices}
    M_{slow}(x,y) = 
    \begin{cases} 
    \left[ \begin{array}{cc} 1&1 \\\hat{a} & (\hat{a}-1) \end{array}\right] & (y \geq 1-x, \, \hat{a} = \floor{1/x}) \\[1em]
    \left[ \begin{array}{cc}\hat{a} & (\hat{a}-1)\\ 1& 1 \end{array}\right] & (y \leq 1-x, \, \hat{a} = \floor{1/(1-x)}). \\
    \end{cases}
\end{equation}
Of particular interest is when $y \leq 1-x$, and $1-x \geq 1/2$ (i.e. $\hat{a}=1$), in which case $\sigma(A) = BA$, $\sigma(B)=B$, i.e. those $x,y$ for which
\begin{equation}\label{eqn:special matrix} M_{slow}(x,y) = \left[ \begin{array}{cc} 1 & 0 \\ 1 & 1 \end{array}\right].\end{equation}
Note that 
\begin{equation}\label{eqn:special matrix powers}
\left[ \begin{array}{cc} 1 & 0 \\ 1 & 1 \end{array}\right]^d=
\left[ \begin{array}{cc} 1 & 0 \\ d & 1 \end{array}\right].
\end{equation}
Finally, define the ergodic composition recursively by
\[
\sigma_{slow}(x,y,k+1) = \sigma_{slow}(x,y,k) \circ \sigma_{slow}\left(\hat{T}^{k}_{slow}(x,y)\right)
\]
with the convention that $\sigma_{slow}(x,y,0)$ is the identity substitution, and with the matrices $M_{slow}(x,y,k)$ similarly defined. 

Recall that $\hat{T}_{fast}$ was equivalent to composing all occurrences $\hat{T}_{slow}$ where $y \leq 1-x$ and $1-x \geq 1/2$, the ``red edges with $\hat{a}_{k+1}=1$," into the next application of $\hat{T}_{slow}$. In $PG_{i,n}$ there are exactly $i$ such occurrences (with $0 \leq i \leq n-1$), and in $PR_{i,n}$ there are exactly $n-1$. In other words, we may define $\sigma_{fast}(x,y)$ as

\begin{equation}\label{equation - fast sigma}
\sigma_{fast}(x,y) = \begin{cases} \sigma_{slow}(x,y,i+1) & (x,y) \in PG_{i,n} \\ \sigma_{slow}(x,y,n) & (x,y) \in PR_{i,n}. \end{cases}
\end{equation}
We also define the matrices $M_{fast}(x,y)$, $M_{fast}(x,y,k)$ similarly. By combining \Cref{eqn:slow matrices}, \Cref{eqn:special matrix}, \Cref{eqn:special matrix powers}, \Cref{equation - fast sigma}:

\begin{equation*}
    M_{fast}(x,y,k) = 
    \begin{cases} 
    \left[ \begin{array}{cc} 1&1 \\ \hat{a} &(\hat{a}-1) \end{array}\right] \cdot \left[ \begin{array}{cc}1 &0 \\i &1 \end{array}\right]& \left((x_k,y_k) \in PG_{i,n}\right), \quad \hat{a}=\floor{1/x_n})\\[10pt]
    \left[ \begin{array}{cc}\hat{a} & (\hat{a}-1)\\1 &1 \end{array}\right] \cdot \left[ \begin{array}{cc}1 &0 \\(n-1) &1 \end{array}\right]& \left((x_k,y_k) \in PR_{i,n}, \quad \hat{a}=\floor{1/(1-x_n)}\right).\\
    \end{cases}
\end{equation*}
The values of $\hat{a}$ can be determined as follows: in $PG_{i,n}$, after $i$ consecutive ``red edge with $\hat{a}_{k+1}=1$" applications of $\hat{T}_{slow}$, we will ``follow the green edge" with $\hat{T}_{slow}$ but where the first partial quotient is $n-i$. So in $PG_{i,n}$, we use $\hat{a}=n-i$. In the $PR_{i,n}$, after $n-1$ applications of ``red edge with $\hat{a}_{k+1}=1$" we will follow the red edge for a rotation whose standard continued fraction begins with $x'=[1,i,\ldots]$, so $\hat{a}= \floor{1/(1-x')} = i+1$.
Then $M_{fast}(x,y,k)$ is the matrix of the substitution $\sigma_{fast}(x_k,y_k)$; the single substitution generated after $k$ iterations of $\hat{T}_{fast}$. Altogether:
\begin{equation}
    \label{eqn:matrix powers computed}
      M_{fast}(x,y) = 
    \begin{cases} 
    \left[ \begin{array}{cc} i+1&1 \\((n-i)(i+1)-i) &(n-i-1) \end{array}\right]& \left((x_k,y_k) \in PG_{i,n}\right)\\[10pt]
    \left[ \begin{array}{cc}(ni+1)& i\\n &1 \end{array}\right]& \left((x_k,y_k) \in PR_{i,n}\right).\\
    \end{cases}
\end{equation}
These matrices have two distinct real eigenvalues, one positive and one negative: define $\lambda_{\pm}(x,y)$ to be these eigenvalues. From \eqref{eqn:matrix powers computed} we directly compute
\begin{align*}
    &\lambda_{\pm}=\frac{n \pm \sqrt{n^2+4}}{2} & \text{ for } (x,y) \in PG_{i,n} \\
    &\lambda_{\pm} = \frac{(ni+2) \pm \sqrt{(ni+2)^2+4}}{2} & \text{ for } (x,y) \in PR_{i,n}. 
\end{align*}
We can now state:
\begin{theorem}\label{theorem - lambda integrable}
Both $\log^+\|M_{fast}(x,y)\|$ and $\log^+\|M_{fast}^{-1}(x,y)\|$ are Lebesgue integrable on $[0,1]^2$, where $\log^+(t)=\max\{0,\log(t)\}$ and the norm is the operator norm.
\begin{proof}
We begin with several claims about these norms:
\begin{enumerate}
    \item In either $PG_{i,n}$ or $PR_{i,n}$, both of $\|M_{fast}\|, \|M_{fast}^{-1}\| \geq 1$. That is: the $\log^+$ function will always simply evaluate as a logarithm.
    \item In $PG_{i,n}$, $\|M_{fast}\|\leq n+1$, while in $PR_{i,n}$, $\|M_{fast}\| \leq ni+3$.
    \item In $PG_{i,n}$, $\|M_{fast}^{-1}\| \leq 2n$, while in $PR_{i,n}$, $\|M_{fast}^{-1}\| \leq 2(ni+2)$.
\end{enumerate}
The matrix $M_{fast}(x,y)$ has two distinct eigenvalues of the form
\[ \lambda_{\pm} = \frac{\zeta \pm \sqrt{\zeta^2+4}}{2},\]
where $\zeta$ is a positive integer. Specifically, $\zeta = n$ in $PG_{i,n}$ and $\zeta = ni+2$ in $PR_{i,n}$. It follows that
\begin{equation}\label{eigMfast}
\|M_{fast}(x,y)\| = \frac{\zeta+\sqrt{\zeta^2+4}}{2}, \qquad \|M_{fast}^{-1}(x,y)\| = \frac{2}{\sqrt{\zeta^2+4}-\zeta}.    
\end{equation}
The inequality $\zeta >0$ establishes that these norms are at least one and completes our first claim.

For the upper bound on $\|M_{fast}(x,y)\|$, the elementary estimate $\zeta^2+4 <(\zeta+2)^2$ yields the desired result. For the upper bound on the norm of the inverse operator, $(\zeta+1/\zeta)^2<\zeta^2+4$ shows that $\|M_{fast}^{-1}(x,y)\|< 2\zeta$: $\zeta=n$ or $\zeta=ni+2$ in the $PG_{i,n}$ and $PR_{i,n}$ respectively complete this step.

We now proceed with the proof of integrability. The set $PG_{i,n}$ is a trapezoid with width $1/n-1/(n+1)<1/n^2$ and longer height $1/n$. The set $PR_{i,n}$ is a trapezoid with width $(i+1)/(n(i+1)+1)-i/(ni+1)<1/(ni+1)^2$ and longer height $1/(ni+1)$. So
\[ \textrm{Area}(PG_{i,n})< \frac{1}{n^3},\]
\[ \textrm{Area}(PR_{i,n})< \frac{1}{(ni+1)^3}< \frac{1}{(ni)^3}.\]
Recall that our Markov partition is given by $PG_{i,n}$ for $n=1,2,\ldots$ and $0 \leq i \leq n-1$, and $PR_{i,n}$ for $i,n=1,2,\ldots$. With our earlier claims, then:
\begin{align*}
    \int_{[0,1]^2} \log^+\|M_{fast}(x,y)\| dxdy &< \sum_{n=1}^{\infty} \sum_{i=0}^{n-1} \int_{PG_{i,n}}\log(n+1)dxdy + \sum_{i=1}^{\infty} \sum_{n=1}^{\infty} \int_{PR_{i,n}} \log(ni+3)dxdy\\
    &< \sum_{n=1}^{\infty}\sum_{i=0}^{n-1} \frac{\log(n+1)}{n^3}+\sum_{i=1}^{\infty}\sum_{n=1}^{\infty} \frac{\log(ni+3)}{(ni)^3}\\
    &< \sum_{n=1}^{\infty}\sum_{i=0}^{n-1} \frac{\log(n+1)}{n^3}+\sum_{i=1}^{\infty}\sum_{n=1}^{\infty} \frac{\log((n+2)(i+2))}{(ni)^3}\\
    &< \sum_{n=1}^{\infty} \frac{\log(n+1)}{n^2} + \left(\sum_{i=1}^{\infty}\frac{1}{i^3} \right) \left(\sum_{n=1}^{\infty} \frac{\log(n+2)}{n^3} \right)+\left(\sum_{n=1}^{\infty}\frac{1}{n^3} \right) \left(\sum_{i=1}^{\infty} \frac{\log(i+2)}{i^3} \right).
\end{align*}
Summability of all terms is elementary. Integrability of $\log ^+\|M_{fast}^{-1}\|$ is handled analogously; observe the similarity in the bounds derived.
\end{proof}
\end{theorem}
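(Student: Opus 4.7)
My plan is to integrate over the Markov partition $\{PG_{i,n}\}\cup\{PR_{i,n}\}$ introduced just before the theorem. Since \eqref{eqn:matrix powers computed} presents $M_{fast}(x,y)$ as a constant integer matrix on each cell, both integrands are piecewise constant, and the integrals collapse to double sums of the form $\sum_{i,n}\mathrm{Area}(\cdot)\cdot\log^+\|M_{fast}(\cdot)\|$. The task therefore reduces to two ingredients: (i) a polynomial upper bound on $\|M_{fast}^{\pm 1}\|$ per cell, and (ii) a cubic upper bound on the cell areas, with (ii) absorbing (i) after applying $\log$.

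For the norm bounds I would use the elementary inequality $\|M\|_{op}\le\|M\|_F\le 2\max_{j,k}|M_{jk}|$ valid for $2\times 2$ matrices. Reading entries off \eqref{eqn:matrix powers computed}, the largest entry on $PG_{i,n}$ is $(n-i)(i+1)-i\le n^{2}$, and on $PR_{i,n}$ it is $ni+1$. A direct cofactor computation shows $\det M_{fast}=\pm 1$ on every cell, so $M_{fast}^{-1}$ has the same entries as $M_{fast}$ up to sign and swap, and satisfies the same entry bound. Consequently $\log^+\|M_{fast}^{\pm 1}\|\le C\log n$ on $PG_{i,n}$ and $\le C\log(ni+1)$ on $PR_{i,n}$.

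For the areas I would integrate the trapezoidal heights directly: one gets $\mathrm{Area}(PG_{i,n})=\tfrac12\bigl(n^{-2}-(n+1)^{-2}\bigr)\le n^{-3}$, and an analogous computation using the explicit $x$-boundaries $i/(ni+1)$, $(i+1)/(n(i+1)+1)$ yields $\mathrm{Area}(PR_{i,n})\le (ni+1)^{-3}$. Combining everything,
\[
\int_{[0,1]^{2}}\log^{+}\|M_{fast}\|\,dx\,dy\;\le\;C\sum_{n=1}^{\infty}\sum_{i=0}^{n-1}\frac{\log n}{n^{3}}\;+\;C\sum_{n,i\ge 1}\frac{\log(ni+1)}{(ni)^{3}}.
\]
The first sum is comparable to $\sum \log n/n^{2}<\infty$, and the second, after splitting $\log(ni+1)\le\log 2+\log n+\log i$, reduces to products of the convergent series $\sum n^{-3}$ and $\sum n^{-3}\log n$. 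The integrand $\log^{+}\|M_{fast}^{-1}\|$ is controlled identically using the cofactor observation.

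The main obstacle is really just bookkeeping: one might fear that the $(2,1)$-entry $(n-i)(i+1)-i$ on a $PG$ cell, which can be as large as $n^{2}/4$, spoils the norm estimate. In fact \emph{any} polynomial upper bound is swallowed by the cubic area decay once a logarithm is taken, so the precise polynomial degree is immaterial — the crude Frobenius bound suffices, and the series comparisons above are the only genuine computations required.
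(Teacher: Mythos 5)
Your proposal is correct and follows the same overall architecture as the paper's proof: decompose $[0,1]^2$ into the Markov cells $PG_{i,n}$, $PR_{i,n}$ on which $M_{fast}$ is constant, bound the integrand by $O(\log n)$ resp.\ $O(\log(ni))$ per cell, bound the areas by $n^{-3}$ resp.\ $(ni)^{-3}$, and sum. The one step where you genuinely diverge is the derivation of the per-cell norm bounds. The paper computes the eigenvalues $\lambda_{\pm}=(\zeta\pm\sqrt{\zeta^2+4})/2$ with $\zeta=n$ or $ni+2$ and identifies $\|M_{fast}\|$ with $\lambda_+$ and $\|M_{fast}^{-1}\|$ with $1/|\lambda_-|$; you instead use $\|M\|_{op}\le\|M\|_F\le 2\max_{j,k}|M_{jk}|$ together with the observation that $\det M_{fast}=\pm1$, so that the adjugate (hence the inverse) obeys the same entry bound. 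Your route is a bit cruder quantitatively (you get $O(n^2)$ rather than $O(n)$ inside the logarithm on a $PG$ cell, which as you note is immaterial), but it is arguably more robust: equating the operator norm with the spectral radius, as the paper does, is not automatic for non-normal matrices, whereas the Frobenius bound and the unimodularity argument require no such identification. Two trivial bookkeeping points: on $PG_{i,n}$ the largest entry is not always $(n-i)(i+1)-i$ (for $i=n-1$ it is $i+1$), but all entries are indeed $\le n^2$ so your bound stands; and your stated bound $C\log n$ degenerates to $0$ on the single cell $PG_{0,1}$, where the integrand is a positive constant --- this cell has finite area and bounded integrand, so convergence is unaffected.
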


\Cref{theorem - lambda integrable} will be one of our primary tools in studying the growth rate of the slow approximating sequence and fast approximating sequence. We again begin with the slow situation before moving on the fast. Observe that between $I_{slow}(n)$ and $I_{slow}(n+1)$, one endpoint is always shared between these two intervals, and one endpoint is not. Define a sequence of $\epsilon_{slow,n}$ to track the number of times that orientation has been reversed in applying $\hat{T}_{slow}$:
\[ \epsilon_{slow,0}=1, \qquad \epsilon_{slow,n+1} = \begin{cases} -\epsilon_{slow,n} & \hat{T}_{slow}^n(x,y) \in G \\ \epsilon_{slow,n} & \hat{T}_{slow}^n(x,y) \in R. \end{cases}\]
That is: we reverse the value of $\epsilon_{slow,n}$ when we ``follow the green edge," but not when we ``follow the red edge."

Define a sequence of words $\rho_{slow,0,k}$ and $\rho_{slow,1,k}$ as follows, where $\omega_1 \cdot \omega_2$ simply refers to the concatenation of two words and $e$ refers to the empty word; this notation will be helpful for words which are defined with numerous subscripts and functions:
\[ \rho_{slow,0,0}=e, \qquad \rho_{slow,1,0}=e\]
\begin{align}\label{eqn:rho words} \begin{split}\rho_{slow,0,k+1} &= \begin{cases}  
\rho_{slow,0,k} & \epsilon_{slow,k}=1, \, \hat{T}_{slow}^k(x,y) \in R\\
 \sigma_{slow}(x,y,k)(A) \cdot \rho_{slow,0,k} & \epsilon_{slow,k}=1, \,\hat{T}_{slow}^k(x,y) \in G\\
\sigma_{slow}(x,y,k)(B)\cdot \rho_{slow,0,k} & \epsilon_{slow,k}=-1, \, \hat{T}_{slow}^k(x,y) \in R\\
 \rho_{slow,0,k} & \epsilon_{slow,k}=-1, \, \hat{T}_{slow}^k(x,y) \in G\\
 \end{cases} \\
\rho_{slow,1,k+1} &= 
\begin{cases} 
\sigma_{slow}(x,y,k)(B)\cdot \rho_{slow,1,k} & \epsilon_{slow,k}=1, \, \hat{T}_{slow}^k(x,y) \in R\\
\rho_{slow,1,k} & \epsilon_{slow,k}=1, \,\hat{T}_{slow}^k(x,y) \in G\\
\rho_{slow,1,k} & \epsilon_{slow,k}=-1, \, \hat{T}_{slow}^k(x,y) \in R\\
\sigma_{slow}(x,y,k)(A)\cdot \rho_{slow,1,k} & \epsilon_{slow,k}=-1, \, \hat{T}_{slow}^k(x,y) \in G.\\
 \end{cases}\end{split}\end{align}

The choice of $j=0$ or $j=1$ in $\rho_{slow,j,k}$ relates to whether these words encode the orbit of endpoints of $I_{slow}(k)$ until they reach $0^+$ or $1^-$, respectively:

\begin{lemma}\label{lemma - slow encoding lemma}The substitution $\sigma_{slow}$ maps the symbolic coding of the pair $\hat{T}_{slow}(x,y)$ to the symbolic coding of the pair $(x,y)$:
\[\Omega(x,y) = \sigma_{slow}(x,y) \left( \Omega(\hat{T}_{slow}(x,y))\right).\]

The left endpoint and right endpoint of $I_{slow}(k)$ are given by $-\|\rho_{slow,0,k}\|x$ and $-\|\rho_{slow,1,k}\|x$, respectively, where $\|\omega\|$ refers to the length of the word $\omega$.
\begin{proof}
The first claim follows from the fact that we are constructing first-return maps on intervals. When $y \in [1-x,1]$, the first-return map on an interval of length $x$ has two return times; $a$ and $a+1$, where $a=\floor{1/x}$. Furthermore, since the interval on which we are constructing the map is the interval labeled $A$, it follows that points in $A$ have their first-returns encoded either by the words $AB^{a-1}$ (when the return time is $a$) or $AB^{a}$ (when the return time is $a+1$); points begin in this interval labeled $A$, and the remainder of the orbit until returning to the interval labeled $A$ is spent in the complement labeled $B$. These words are exactly those used by $\sigma_{slow}(x,y)$ when $y \geq 1-x$. The argument is similar for $y \leq 1-x$, we simply note that we are constructing the first-return map on the interval labeled $B$, but we consider the rotation to be of length $1-x$ in the opposite orientation. This explains both composition with the substitution $\tau$ and the return times being dictated by $\floor{1/(1-x)}$.

Turning our attention to the words $\rho_{slow,i,k}$, for $k=0$ the result is immediate. We begin with $\rho_{slow,0,0}=e=\rho_{slow,1,0}$, a word of length zero, and both endpoints are of the form $-0x$. For the recursive procedure, let us consider the situation with $\epsilon_{slow,k}=1$. See \Cref{figure - two first returns in one} for both possibilities for $\epsilon_{slow,k}=1$ (we begin with a map in the forward orientation); when $\hat{T}_{slow}^k(x,y) \in R$, we use the red arrow to construct a first-return map in the interval $[0,1-x]$. In this case, the left endpoint stays the same, so $\rho_{slow,0,k+1}=\rho_{slow,0,k}$. But the new right endpoint of $1-x$ was previously encoded by $B$ in the prior system. The word $B$ therefore encodes this endpoint through one step in the previous system, at which point it would be sent to $1$. The first result of this lemma therefore gives that $\sigma_{slow}(x,y,k)(B)$ encodes the orbit of this new right endpoint until orbiting onto the previous right endpoint, so concatenation with $\rho_{slow,1,k}$ will give the word which encodes the new right endpoint until it arrives at $1$ in the starting system, rotation by $x_0$ in $I_{slow}(0)$: $\sigma_{slow}(x,y,k)(B)\cdot \rho_{slow,1,k}=\rho_{slow,1,k+1}$.

The green arrow in the same figure informs how to consider the situation $\epsilon_{slow,k}=1$, $\hat{T}_{slow}^k(x,y) \in G$. In this case the right endpoint stays the same, so $\rho_{slow,1,k+1}=\rho_{slow,1,k}$. But the left endpoint is new, having been encoded by $A$ in the previous step and being mapped to the previous left endpoint in one step: $\rho_{slow,0,k+1}=\sigma_{slow}(x,y,k)(A)\cdot \rho_{slow,0,k}$. In situations where $\epsilon_{slow,k}=-1$, the diagrams in \Cref{figure - two first returns in one} need only be reversed in orientation and considered similarly.

The words $\rho_{slow,i,k}$ therefore encode the orbits of the left and right endpoints of $I_{slow}(k)$ through their eventual returns to the endpoints of $I_{slow}(0)$, and therefore the lengths of these words are the return times necessary to return to the origin: the endpoints are given by $-\|\rho_{slow,0,k}\|x$ and $-\|\rho_{slow,1,k}\|x$.
\end{proof}
\end{lemma}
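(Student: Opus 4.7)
The lemma has two claims, a substitution identity and a formula for the endpoints of $I_{slow}(k)$; I will prove the identity first and then apply it in an induction for the endpoints.

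For the identity $\Omega(x,y) = \sigma_{slow}(x,y)\bigl(\Omega(\hat{T}_{slow}(x,y))\bigr)$, I split on whether $y \in [1-x,1]$ or $y \in [0,1-x]$. In the first case, $\hat{T}_{slow}(x,y)$ is the rescaling via $\tau(z) = (1-z)/x$ of the first-return map of $R_x$ to $A = [1-x,1]$. By \Cref{lemma - first return} and its proof, this first-return map splits $A$ at $1-\hat{x}$ (with $\hat{x} = (a+1)x - 1$ and $a = \floor{1/x}$) into $[1-x, 1-\hat{x}]$ of longer return time $a+1$ and $[1-\hat{x}, 1]$ of shorter return time $a$. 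The orientation-reversing $\tau$ identifies the longer-return subinterval with the ``new $A$'' region $[1-T(x),1]$ and the shorter-return subinterval with the ``new $B$'' region $[0, 1-T(x)]$. Since every return cycle starts with one $A$ symbol and spends its remaining $N(y)-1$ steps in $B$ before re-entering $A$, the ``new $A$'' symbol expands under the recoding to $AB^a$ and the ``new $B$'' to $AB^{a-1}$, precisely $\sigma_x$. For $y \in [0,1-x]$ the argument is identical after reinterpreting $R_x$ as $R_{1-x}$ in reversed orientation and inducing on $[0,1-x]$; composition with $\tau$ (which swaps $A \leftrightarrow B$) accounts for the appearance of $\tau \circ \sigma_{1-x}$ in \Cref{equation - slow sigma}.

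For the endpoint formula I induct on $k$. The base case $k = 0$ is immediate, as both endpoints of $I_{slow}(0) = [0,1]$ equal $-0 \cdot x$ (in their left- and right-sided versions) and $\rho_{slow,0,0} = \rho_{slow,1,0} = e$. Iterating the identity gives $\Omega(x,y) = \sigma_{slow}(x,y,k)\bigl(\Omega(\hat{T}_{slow}^k(x,y))\bigr)$; more broadly, $\sigma_{slow}(x,y,k)$ acts as a universal recoding, sending any single symbol in the $k$-th rescaled alphabet to the word encoding the corresponding orbit segment in the original system, for the orbit of \emph{any} point, not merely $y$. For the inductive step, $I_{slow}(k+1)$ shares one endpoint with $I_{slow}(k)$ (the preserved endpoint, whose $\rho$-word is inherited unchanged) and acquires a new endpoint: the split point at $1-x_k$ of the $k$-th rescaled system. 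That split point lies in the $B$ region of the $k$-th rescaled system when the $k$-th edge is red ($y_k \leq 1-x_k$) and in the $A$ region when green; in one rescaled step it reaches the opposite rescaled endpoint ($1^-$ on red, $0^+$ on green), which under the orientation $\epsilon_{slow,k}$ corresponds to the non-preserved original endpoint of $I_{slow}(k)$. Therefore the $\rho$-word of the new endpoint equals $\sigma_{slow}(x,y,k)$ applied to the single symbol ($B$ for red, $A$ for green) concatenated with the $\rho$-word of the non-preserved old endpoint, which exactly matches \Cref{eqn:rho words}.

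The main obstacle is the orientation bookkeeping. When $\epsilon_{slow,k} = +1$ the rescaling preserves left/right, so rescaled $0^+$ corresponds to the original left endpoint of $I_{slow}(k)$ and rescaled $1^-$ to the original right; when $\epsilon_{slow,k} = -1$ these assignments swap. Combined with the choice of red versus green edge, this determines which subscript (0 or 1) labels the preserved versus the new endpoint of $I_{slow}(k+1)$, and thereby produces the four branches of \Cref{eqn:rho words}. Each branch is then a direct verification; the difficulty is not in any single case but in setting up and maintaining the correct left/right and region correspondences across the four cases.
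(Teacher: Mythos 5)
Your proposal is correct and follows essentially the same route as the paper's proof: the substitution identity is derived from the two return times of the first-return map (each return word beginning with one symbol for the inducing interval followed by $N(y)-1$ symbols for its complement), and the endpoint formula is established by induction, tracking which endpoint of $I_{slow}(k+1)$ is preserved and recoding the one-step orbit of the new endpoint via $\sigma_{slow}(x,y,k)$, with the orientation sign $\epsilon_{slow,k}$ determining which subscript is which. Your explicit remark that $\sigma_{slow}(x,y,k)$ recodes the orbit of \emph{any} point of the rescaled system, not just $y$, makes precise a step the paper uses implicitly.
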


\begin{corollary}\label{corollary - quick version of slow encoding}
For $\ell=0,1$, exactly one of $\rho_{slow,\ell,k+1}=\rho_{slow,\ell,k}$, while for the other, either
$\rho_{slow,\ell,k+1} = \sigma_{slow}(x,y,k)(A)\cdot \rho_{slow,\ell,k}$ or $\rho_{slow,\ell,k+1} =\sigma_{slow}(x,y,k)(B)\cdot \rho_{slow,\ell,k}$. 
\begin{proof}
This statement follows immediately from the more precise formulation in \Cref{lemma - slow encoding lemma}. One may immediately derive this statement, however, by noting that we always construct our first return map by sharing one endpoint with the previous system, and the other endpoint was either encoded by $A$ or $B$ with a return time of one.
\end{proof}
\end{corollary}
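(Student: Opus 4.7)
The plan is essentially a tabulation. The defining recursion in \Cref{eqn:rho words} has four branches, indexed by the pair $(\epsilon_{slow,k},\,\textrm{position of }\hat T_{slow}^k(x,y))\in\{\pm 1\}\times\{R,G\}$. For each branch I would read off both $\rho_{slow,0,k+1}$ and $\rho_{slow,1,k+1}$ simultaneously and verify the stated dichotomy. When $\epsilon_{slow,k}=+1$, a ``green'' step leaves $\rho_{slow,1}$ unchanged and prepends $\sigma_{slow}(x,y,k)(A)$ to $\rho_{slow,0}$, while a ``red'' step leaves $\rho_{slow,0}$ unchanged and prepends $\sigma_{slow}(x,y,k)(B)$ to $\rho_{slow,1}$; when $\epsilon_{slow,k}=-1$ the roles of $\ell=0$ and $\ell=1$ swap. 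Thus in every one of the four branches, exactly one value of $\ell$ produces $\rho_{slow,\ell,k+1}=\rho_{slow,\ell,k}$ and the other produces a prefix of the form $\sigma_{slow}(x,y,k)(A)\cdot\rho_{slow,\ell,k}$ or $\sigma_{slow}(x,y,k)(B)\cdot\rho_{slow,\ell,k}$, which is precisely the claim.

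A conceptual shortcut that avoids writing out the full tabulation is to use the geometry of the nested intervals. The interval $I_{slow}(k+1)\subset I_{slow}(k)$ is obtained by cutting $I_{slow}(k)$ at a single interior preimage of the origin and retaining the subinterval containing $y$, so $I_{slow}(k+1)$ shares exactly one endpoint with $I_{slow}(k)$ (giving the unchanged-word alternative) and has one genuinely new endpoint. That new endpoint is the endpoint of a first-return atom labeled $A$ or $B$ in the renormalized system at level $k$, so by the substitution part of \Cref{lemma - slow encoding lemma} its orbit up to the moment it hits the old endpoint is encoded by $\sigma_{slow}(x,y,k)(A)$ or $\sigma_{slow}(x,y,k)(B)$; concatenating this prefix with $\rho_{slow,\ell,k}$ completes the encoding back to the starting system. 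No real obstacle arises: all substantive content has already been packaged into \Cref{lemma - slow encoding lemma}, and the only bookkeeping care needed is to keep track of how the sign $\epsilon_{slow,k}$ swaps the roles of $A$ and $B$ (equivalently, the roles of the left and right endpoints), which a small chart of the four cases makes transparent.
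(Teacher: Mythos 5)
Your proposal is correct and follows essentially the same route as the paper: the four-branch tabulation is just a direct reading of the recursion in \Cref{eqn:rho words}, and your ``conceptual shortcut'' (one shared endpoint, one new endpoint whose first-return orbit is encoded by $A$ or $B$) is precisely the observation the paper's proof records. Nothing further is needed.
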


In constructing the sequences of words $\rho_{slow,i,k}$, consecutive applications of these ``red edges with $\hat{a}_{k+1}=1$" correspond to consecutive terms where $\epsilon_{slow,k}=\epsilon_{slow,k+1}$. In \Cref{eqn:rho words}, this condition exactly describes when the words $\sigma_{slow}(x,y,n)(B)$ are considered. However, the individual substitutions being generated map $B \mapsto B$ in this special situation.

In other words, for $(x,y)\in PG_{i,n}$, to determine the action of $\hat{T}_{fast}$ as iterations of $\hat{T}_{slow}$, the map $\hat{T}_{slow}$ would successively act as ``red edge, $a=1$" a total of $i$ consecutive times, but for each $j=1,\ldots,i$ we will have
\[ \sigma_{slow}(x,y,j)(B) = B\]
because the first $i$ substitutions applied will all map $B \mapsto B$. Similarly, if $(x,y) \in PR_{i,n}$, then as we have exactly $n-1$ consecutive such applications of ``red edge, $a=1$", for all $j=1,\ldots,n-1$ we also have
\[ \sigma_{slow}(x,y,j)(B) =B.\]
After these edges, we follow a single application of $\hat{T}_{slow}$ either with a ``green edge," or a ``red edge with $\hat{a}_{k+1} \geq 2$," and we have determined how substitutions help encode a single such instance.

Altogether, we use these observations to define the sequences $\rho_{fast,0,n}$ and $\rho_{fast,1,n}$ which encode the orbits of the left and right endpoints of $I_{fast}(n)$. Let the $\epsilon_{fast,n}=\pm 1 $ similarly to the $\epsilon_{slow,n}$; these values track the number of times we have reversed orientation due to $\hat{T}_{fast}^i(x,y)$ belonging to some $PG_{i,n}$. Specifically:
\[ \epsilon_{fast,0}=1, \qquad \epsilon_{fast,n+1} = \begin{cases} \epsilon_{fast,n} & \hat{T}_{fast}^n(x,y) \in PR_{i,n} \\ -\epsilon_{fast,n} & \hat{T}_{fast}^n(x,y) \in PG_{i,n}.\end{cases}\]
Begin again with $\rho_{fast,0,0}=e$ and $\rho_{fast,1,0}=e$, and then
\begin{align}
\label{eqn:rho fast words}
\begin{split}
    \rho_{fast,0,k+1} &= \begin{cases}
    \rho_{fast,0,k} & \epsilon_{fast,k}=1, \, \hat{T}_{fast}^k(x,y) \in PR_{i,n}\\
    \sigma_{fast}(x,y,k)(B^i A)\cdot \rho_{fast,0,k} & \epsilon_{fast,k}=1, \, \hat{T}_{fast}^k(x,y) \in PG_{i,n}\\
    \sigma_{fast}(x,y,k)(B^{n})\cdot \rho_{fast,0,k}& \epsilon_{fast,k}=-1, \, \hat{T}_{fast}^k(x,y) \in PR_{i,n}\\
    \sigma_{fast}(x,y,k)(B^i)\cdot \rho_{fast,0,k}& \epsilon_{fast,k}=-1, \, \hat{T}_{fast}^k(x,y) \in PG_{i,n}
    \end{cases}\\
    \rho_{fast,1,k+1} &= \begin{cases}
    \sigma_{fast}(x,y,k)(B^n)\cdot\rho_{fast,1,k} & \epsilon_{fast,k}=1, \, \hat{T}_{fast}^k(x,y) \in PR_{i,n}\\
    \sigma_{fast}(x,y,k)(B^i)\cdot\rho_{fast,1,k} & \epsilon_{fast,k}=1, \, \hat{T}_{fast}^k(x,y) \in PG_{i,n}\\
    \rho_{fast,1,k} & \epsilon_{fast,k}=-1, \, \hat{T}_{fast}^k(x,y) \in PR_{i,n}\\
    \sigma_{fast}(x,y,k)(B^i A)\cdot \rho_{fast,1,k} & \epsilon_{fast,k}=-1, \, \hat{T}_{fast}^k(x,y) \in PG_{i,n}.\\
    \end{cases}
\end{split}
\end{align}

\begin{lemma}\label{lemma - fast encoding lemma}The substitution $\sigma_{fast}$ maps the symbolic coding of the pair $\hat{T}_{fast}(x,y)$ to the symbolic coding of the pair $(x,y)$:
\[\Omega(x,y) = \sigma_{fast}(x,y) \left( \Omega(\hat{T}_{fast}(x,y))\right).\]
The words $\sigma_{fast}(x,y,n)(A)$ and $\sigma_{fast}(x,y,n)(B)$ give encodings of the endpoints of $I_{fast}(n)$ through their length.
The left endpoint and right endpoint of $I_{fast}(n)$ are given by $-\|\rho_{fast,0,n}\|x$ and $-\|\rho_{fast,1,n}\|x$, respectively.
\begin{proof}
The first statement is immediate in light of \Cref{lemma - slow encoding lemma} and \Cref{equation - fast sigma}. The information about the $\rho_{fast,i,k}$ follows from a case-by-case analysis, determining the encoding of the left/right endpoints of the interval on which we are going to construct our first-return map until it reaches the previous left/right endpoints.

For example, consider the situation of $\epsilon_{fast,k}=1$, so we are rotating from left to right. Suppose that $\hat{T}_{fast}^k(x,y) \in PG_{i,n}$. Equivalently, if we let $(x',y')=\hat{T}^k_{fast}(x,y)$, then we will be constructing our first-return map on the (normalized) interval $[1-(i+1)x',1-ix']$. The left endpoint takes $(i+1)$ steps to return to the origin. In the interim it will have $i$ consecutive values less than $1-x'$, then a single value equal to (the right-sided version of) $1-x'$. So this portion of the orbit is encoded by $B^i A$, after which we have arrived at the (right-sided version of the) origin, so the orbit continues with $\rho_{fast,0,k}$. Overall:
\[\rho_{fast,0,k+1}= \sigma_{fast}(x,y,k)(B^iA)\cdot \rho_{fast,0,k}.\]

In contrast, the right endpoint of $1-ix'$ will orbit to the (left-sided version of the) origin after $i$ steps, having its orbit encoded by $B^i$:
\[\rho_{fast,1,k+1}=\sigma_{fast}(x,y,k)(B^i)\cdot \rho_{fast,1,k}.\]
Other cases are handled similarly.
\end{proof}
\end{lemma}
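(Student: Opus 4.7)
The plan is to reduce every claim to the already-established slow lemma together with the definition of $\sigma_{fast}$ as a composition of $\sigma_{slow}$'s. The substitution identity $\Omega(x,y) = \sigma_{fast}(x,y)\bigl(\Omega(\hat{T}_{fast}(x,y))\bigr)$ is immediate: by \eqref{equation - fast sigma} we have $\sigma_{fast}(x,y) = \sigma_{slow}(x,y,m)$ and $\hat{T}_{fast}(x,y) = \hat{T}^m_{slow}(x,y)$ with $m=i+1$ on $PG_{i,n}$ and $m=n$ on $PR_{i,n}$, and iterating the slow substitution identity from \Cref{lemma - slow encoding lemma} a total of $m$ times along the slow orbit produces the fast identity.

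For the endpoint statement, I would use the fact that $I_{fast}(n) = I_{slow}(k_n)$, so \Cref{lemma - slow encoding lemma} already identifies the endpoints of $I_{fast}(n)$ as $-\|\rho_{slow,0,k_n}\|x$ and $-\|\rho_{slow,1,k_n}\|x$. It therefore suffices to show the stronger statement that the fast and slow words agree literally, $\rho_{fast,\ell,n} = \rho_{slow,\ell,k_n}$ for $\ell\in\{0,1\}$ and all $n\geq 0$. The proof is by induction on $n$: the base case is immediate since all words involved are empty, and for the inductive step one unrolls the block of $k_{n+1}-k_n$ slow steps comprising one fast step and checks that the compounded effect of \eqref{eqn:rho words} over that block is exactly the matching branch of \eqref{eqn:rho fast words}.

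The computational heart of the induction is as follows. In $PG_{i,n}$ the first $i$ slow steps are ``red edges with $\hat{a}=1$'', each acting on the alphabet via $A\mapsto BA$, $B\mapsto B$ (the substitution of \eqref{eqn:special matrix}) and preserving $\epsilon$; the final slow step is a green edge with first partial quotient $n-i$, which flips $\epsilon$. Iterating $A\mapsto BA$, $B\mapsto B$ a total of $i$ times gives $A\mapsto B^iA$, $B\mapsto B$, so
\[ \sigma_{slow}(x,y,k_n+i)(A) = \sigma_{slow}(x,y,k_n)(B^iA) = \sigma_{fast}(x,y,n)(B^iA), \]
and similarly $\sigma_{slow}(x,y,k_n+i)(B) = \sigma_{fast}(x,y,n)(B^i)$. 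Running \eqref{eqn:rho words} through these $i$ unflipped red steps followed by the single green step then exactly reproduces the four $PG_{i,n}$-branches (indexed by $\epsilon_{fast,k}=\pm 1$ and by $\ell=0,1$) of \eqref{eqn:rho fast words}. The case $(x,y)\in PR_{i,n}$ is parallel: the first $n-1$ slow steps are ``red with $\hat{a}=1$'' and the last is a red edge with $\hat{a}\geq 2$, so $\epsilon$ is preserved throughout the block; this yields the $B^n$-patterns appearing in the $PR$-branches of \eqref{eqn:rho fast words}.

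The main obstacle is bookkeeping: four cases per step, two block types, and correct propagation of $\epsilon$ through the block. I would streamline the argument by isolating the identity $\sigma^i(A)=B^iA$, $\sigma^i(B)=B$ (with $\sigma$ the substitution of \eqref{eqn:special matrix}) as a standalone calculation used across every case, then verifying one representative case of \eqref{eqn:rho fast words} in detail; the remaining cases follow by symmetry under swapping $0\leftrightarrow 1$ in the first subscript of $\rho$, reversing $\epsilon$, or interchanging a ``terminal green edge'' with a ``terminal red edge with $\hat{a}\geq 2$''.
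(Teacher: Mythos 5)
Your proposal is correct, but it proves the endpoint claims by a genuinely different route than the paper. The paper re-runs the geometric argument from scratch at the fast level: it looks at the interval $[1-(i+1)x',1-ix']$ on which the next first-return map is built, counts that its left endpoint takes $i+1$ steps to reach the origin while visiting $[0,1-x']$ exactly $i$ times and $[1-x',1]$ once (hence the word $B^iA$), and similarly for the other endpoint and the other cases. You instead make the fast lemma a formal corollary of \Cref{lemma - slow encoding lemma}: since $I_{fast}(n)=I_{slow}(k_n)$, the slow lemma already locates the endpoints, and it remains only to verify the word identity $\rho_{fast,\ell,n}=\rho_{slow,\ell,k_n}$ by unrolling the block of $k_{n+1}-k_n$ slow steps and using $\sigma^i(A)=B^iA$, $\sigma^i(B)=B$ for the substitution of \eqref{eqn:special matrix}, together with the fact that $\epsilon_{slow}$ is constant across the red portion of each block so that $\epsilon_{fast,n}=\epsilon_{slow,k_n}$. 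I checked your representative case ($\epsilon=1$, $PG_{i,n}$) against \eqref{eqn:rho words} and \eqref{eqn:rho fast words} and the telescoping works exactly as you describe. Your version buys a cleaner logical dependence (no new geometry, and it makes explicit the literal equality of fast and slow words, which the paper uses only implicitly), at the cost of more symbolic bookkeeping; the paper's version is more self-explanatory about \emph{why} the letters $B^iA$, $B^i$, $B^n$ appear. The only item you leave untouched is the lemma's middle sentence about $\sigma_{fast}(x,y,n)(A)$ and $\sigma_{fast}(x,y,n)(B)$ encoding endpoints through their lengths, but the paper's own proof does not address that sentence separately either.
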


\begin{corollary}\label{corollary - quick version of fast encoding}
Between $\ell=0,1$, if $\hat{T}_{fast}(x,y,k) \in PR_{i,n}$, exactly one of the words $\rho_{fast,\ell,k+1}$ is given by $\rho_{fast,\ell,k}$, while the other is given by $\sigma_{fast}(x,y,k)(B^n)\cdot\rho_{fast,\ell,k}$. 

If $\hat{T}_{fast}(x,y,k) \in PG_{i,n}$, then one of the words $\rho_{fast,\ell,k+1}$ is given by $\sigma_{fast}(x,y,k)(B^i\cdot \rho_{fast,\ell,k}$ while the other is given by $\sigma_{fast}(x,y,k)(B^iA)\cdot \rho_{fast,\ell,k}$.
\begin{proof}
Just as \Cref{corollary - quick version of slow encoding} followed immediately from \Cref{lemma - slow encoding lemma}, this result follows immediately from \Cref{lemma - fast encoding lemma}.
\end{proof}
\end{corollary}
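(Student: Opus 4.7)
The plan is to obtain the corollary as a direct pattern match on the four-branch definition of $\rho_{fast,\ell,k+1}$ in \eqref{eqn:rho fast words}, in exactly the same way that \Cref{corollary - quick version of slow encoding} was extracted from \eqref{eqn:rho words}. Concretely, the eight listed branches (four for $\ell=0$ and four for $\ell=1$) will be grouped first by which partition element contains $\hat{T}_{fast}^k(x,y)$, and then by the value of $\epsilon_{fast,k}\in\{+1,-1\}$.

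Suppose first that $\hat{T}_{fast}^k(x,y)\in PR_{i,n}$. When $\epsilon_{fast,k}=+1$, the definition gives $\rho_{fast,0,k+1}=\rho_{fast,0,k}$ and $\rho_{fast,1,k+1}=\sigma_{fast}(x,y,k)(B^n)\cdot\rho_{fast,1,k}$; when $\epsilon_{fast,k}=-1$ the two assignments swap which of the indices $\ell=0,1$ is the unchanged one. In either sub-case exactly one of the two words is left alone and the other acquires the prefix $\sigma_{fast}(x,y,k)(B^n)$, which is the first claim of the corollary.

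If instead $\hat{T}_{fast}^k(x,y)\in PG_{i,n}$, the same procedure applied to the remaining four branches yields that one of $\rho_{fast,0,k+1}$, $\rho_{fast,1,k+1}$ is prepended with $\sigma_{fast}(x,y,k)(B^iA)$ while the other is prepended with $\sigma_{fast}(x,y,k)(B^i)$; once again, toggling $\epsilon_{fast,k}$ only swaps which index receives which prefix. This establishes the second claim.

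There is no real conceptual obstacle: the proof is bookkeeping, and the point of the corollary is simply that once one commits to the machinery of \Cref{lemma - fast encoding lemma}, the recursion for the $\rho_{fast,\ell,k}$ can be recorded without reference to the auxiliary sign $\epsilon_{fast,k}$, which will streamline their use in subsequent growth-rate arguments. Should a cleaner, coordinate-free derivation be preferred, one could instead argue directly from \Cref{lemma - fast encoding lemma}: the interval $I_{fast}(k+1)$ shares exactly one endpoint with $I_{fast}(k)$ (giving the unchanged word), while the new endpoint must traverse a single first-return block of $\hat{T}_{fast}$ at level $k$ before reaching an endpoint of $I_{fast}(k)$, and by the first conclusion of \Cref{lemma - fast encoding lemma} this traversal is encoded by the image under $\sigma_{fast}(x,y,k)$ of the appropriate word ($B^n$, $B^i$, or $B^iA$) in the rescaled system.
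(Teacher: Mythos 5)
Your proposal is correct and matches the paper's intent: the corollary is just the recursion \eqref{eqn:rho fast words} with the sign $\epsilon_{fast,k}$ forgotten, and your four-case check (grouped by $PR_{i,n}$ versus $PG_{i,n}$ and by $\epsilon_{fast,k}=\pm 1$) is exactly the ``follows immediately'' that the paper invokes via \Cref{lemma - fast encoding lemma}. Your closing remark about the shared endpoint also mirrors the paper's own aside in \Cref{corollary - quick version of slow encoding}, so nothing further is needed.
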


We wish to study the growth rate of the slow and fast approximating sequences, so by \Cref{lemma - slow encoding lemma}, \Cref{lemma - fast encoding lemma}, we equivalently wish to study the growth rates of concatenations of various words generated by the substitutions $\sigma_{slow}(x,y,k)$ and $\sigma_{fast}(x,y,k)$. Our first step is to show that up to a linear factor, the growth rate is determined by the new term being concatenated, and not the previous terms generated. Define
\begin{align*}
N_{slow}(x,y,k)&=\max\left\{ \left\|\rho_{slow,0,k}\right\|,\left\|\rho_{slow,1,k}\right\|\right\},\\
MAX_{slow}(x,y,k)&=\max\left\{ \left\|\sigma_{slow}(x,y,k)(A)\right\|,\left\|\sigma_{slow}(x,y,k)(B)\right\|\right\},\\
min_{slow}(x,y,k)&=\min\left\{ \left\|\sigma_{slow}(x,y,k)(A)\right\|,\left\|\sigma_{slow}(x,y,k)(B)\right\|\right\}\\
\end{align*}
and then also define
\begin{align*}
N_{fast}(x,y,k)&=\max\left\{ \left\|\rho_{fast,0,k}\right\|,\left\|\rho_{fast,1,k}\right\|\right\},\\
MAX_{fast}(x,y,k)&=\max\left\{ \left\|\sigma_{fast}(x,y,k)(A)\right\|,\left\|\sigma_{fast}(x,y,k)(B)\right\|\right\},\\
min_{fast}(x,y,k)&=\min\left\{ \left\|\sigma_{fast}(x,y,k)(A)\right\|,\left\|\sigma_{fast}(x,y,k)(B)\right\|\right\}.\\
\end{align*}

\begin{lemma}
\label{lemma:word growth estimates}
For any $x,y$, we have
\[ min_{slow}(x,y,k) \leq N_{slow}(x,y,k+1) \leq (k+1) \cdot MAX_{slow}(x,y,k+1),\]
\[ min_{fast}(x,y,k) \leq N_{fast}(x,y,k+1) \leq (k+1) \cdot  MAX_{fast}(x,y,k+1).\]
\begin{proof}
The lower bounds are immediate; in light of \Cref{corollary - quick version of slow encoding}, at least one of the words $\rho_{slow,0,k+1}$, $\rho_{slow,1,k+1}$ will involve concatenation with $\sigma_{slow}(x,y,k)(A)$ or $\sigma_{slow}(x,y,k)(B)$ so whichever of the two $\rho_{slow}$ is larger, $N_{slow}(x,y,k+1)$ will be at least as large as the smaller of those two words. The result is similar for the lower bound on the $N_{fast}$; by \Cref{corollary - quick version of fast encoding} some $\sigma_{fast}(x,y,k)(A)$ or $\sigma_{fast}(x,y,k)(B)$ will always be concatenated to one of the previous $\rho_{fast}$.

For the upper bound on $N_{slow}$, we prove by induction. Note that for $k=0$ we are considering $N_{slow}(x,y,1)$, which is the length of either $\rho_{slow,0,1}$ or $\rho_{slow,1,1}$, whichever is larger. But those words refer to the encoding of the endpoints of the first interval on which we construct a first return map; those points are either $0,1-x$ or $1-x,1$. In either case, one is encoded with a word of length zero, and the other with a word of length one. That is: $N_{slow}(x,y,1)=1$ always: our base case is shown.

So assume for some $k-1$ that the upper inequality holds: $N_{slow}(x,y,k) \leq k MAX_{slow}(x,y,k)$. Then in light of \Cref{corollary - quick version of slow encoding}, the longer of the two words $\rho_{slow,0,k+1}$, $\rho_{slow,1,k+1}$ is no longer than the concatenation of either $\sigma_{slow}(x,y,k)(A)$ or $\sigma_{slow}(x,y,k)(B)$ with whichever of $\rho_{slow,0,k}$, $\rho_{slow,1,k}$ was longer:
\begin{align*}
    N_{slow}(x,y,k+1) &\leq MAX_{slow}(x,y,k)+N_{slow}(x,y,k)\\
    &\leq MAX_{slow}(x,y,k)+ k MAX_{slow}(x,y,k)\\
    &\leq (k+1)MAX_{slow}(x,y,k)\\
    &\leq (k+1)MAX_{slow}(x,y,k+1)
\end{align*}
The last line is included only to make the inequality appear similar to the inequalities for $N_{fast}$; the upper bound on the $N_{fast}$ follows from a similar argument. For $k=0$, we have $N_{fast}(x,y,1)$ representing the length of the encoding of some point in the partition $\{0,1-ax,1-(a-1)x,\ldots,1-x,1\}$, so the length of that word is no larger than $a=\floor{1/x}$. On the other hand, $1 \cdot MAX_{fast}(x,y,1)$ is the length of some $\sigma(A)$ or $\sigma(B)$, which are seen to be of length at least $a$. By \Cref{corollary - quick version of fast encoding}, $N_{fast}(x,y,k+1)$ is no larger than concatenating the longest possible word to a word of length $N_{fast}(x,y,k+1)$. But the word we concatenate is of the form $\sigma_{fast}(x,y,k)$ of some factor of the single substitution generated by $(x_k,y_k)$ (see \Cref{eqn:rho fast words}). Therefore the word we concatenate is no larger than $MAX_{fast}(x,y,k+1)$ (the relevant substitution for this term ends with the substitution generated by $(x_k,y_k)$, explaining the appearance of one larger index than naturally occurred in the ``slow" situation, where single letters were always fed into substitutions):

\[N_{fast}(x,y,k+1) \leq MAX_{fast}(x,y,k+1)+N_{fast}(x,y,k).\]
The upper bound now follows inductively as it did in the case of $N_{slow}$.
\end{proof}
\end{lemma}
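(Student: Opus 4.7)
The plan is to prove the four inequalities separately: both lower bounds follow immediately from the preceding corollaries, while both upper bounds will be proved by induction on $k$. For the lower bounds, Corollaries \ref{corollary - quick version of slow encoding} and \ref{corollary - quick version of fast encoding} tell us that in passing from index $k$ to $k+1$, at least one of $\rho_{\bullet,0,k+1}$ or $\rho_{\bullet,1,k+1}$ is obtained by prepending a block of the form $\sigma_{\bullet}(x,y,k)(W)$ onto the corresponding old word, where $W$ is one of $A$, $B$, $B^i$, $B^iA$, or $B^n$. In each case the prepended block has length at least $\min\{|\sigma_{\bullet}(x,y,k)(A)|,\,|\sigma_{\bullet}(x,y,k)(B)|\} = min_{\bullet}(x,y,k)$, giving the lower bounds.

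For the upper bounds I would induct on $k$. The base case $k=0$ reduces to inspecting the very first step of the recursion, since $\rho_{\bullet,0,0} = \rho_{\bullet,1,0} = e$: a direct computation yields $N_{slow}(x,y,1) = 1 \leq MAX_{slow}(x,y,1)$, and reading off column sums from the explicit matrices in \Cref{eqn:matrix powers computed} gives $N_{fast}(x,y,1) \leq MAX_{fast}(x,y,1)$. For the inductive step, the corollaries give the recursive bound $N_{\bullet}(x,y,k+1) \leq N_{\bullet}(x,y,k) + L_{\bullet}(k)$, where $L_{\bullet}(k)$ is the length of the prepended block. In the slow case $L_{slow}(k) \leq MAX_{slow}(x,y,k)$ is direct, so combined with the inductive hypothesis and the monotonicity $MAX_{slow}(x,y,k) \leq MAX_{slow}(x,y,k+1)$ (since $\sigma_{slow}(\hat{T}_{slow}^k(x,y))$ sends each letter to a nonempty word) the induction closes.

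The main obstacle, and the only point requiring real care, is estimating $L_{fast}(k)$: the prepended block is of the form $\sigma_{fast}(x,y,k)(W)$ for a short word $W \in \{B^n, B^i, B^iA\}$ rather than for a single letter, so it is not immediately bounded by $MAX_{fast}(x,y,k)$. The key observation is that each such $W$ is a prefix of the image $\sigma_{fast}(\hat{T}_{fast}^k(x,y))(A)$ or $\sigma_{fast}(\hat{T}_{fast}^k(x,y))(B)$, which can be verified by a short direct calculation of $\sigma_{fast}$ on $PG_{i,n}$ and $PR_{i,n}$ using \Cref{equation - fast sigma} and the matrices in \Cref{eqn:matrix powers computed}. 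Consequently $\sigma_{fast}(x,y,k)(W)$ is a prefix of $\sigma_{fast}(x,y,k+1)(X)$ for some letter $X$, so $L_{fast}(k) \leq MAX_{fast}(x,y,k+1)$; this explains the shift from $MAX_{fast}(x,y,k)$ to $MAX_{fast}(x,y,k+1)$ in the statement and closes the fast induction.
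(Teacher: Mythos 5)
Your proposal is correct and follows essentially the same route as the paper: lower bounds read off from Corollaries \ref{corollary - quick version of slow encoding} and \ref{corollary - quick version of fast encoding}, upper bounds by induction via $N_{\bullet}(x,y,k+1)\leq N_{\bullet}(x,y,k)+L_{\bullet}(k)$, with the same key point that in the fast case the concatenated block is the image under $\sigma_{fast}(x,y,k)$ of a prefix (the paper says ``factor'') of the single-step substitution generated by $(x_k,y_k)$, hence bounded by $MAX_{fast}(x,y,k+1)$. The only cosmetic difference is your base-case verification for the fast bound via column sums of the matrices in \eqref{eqn:matrix powers computed}, versus the paper's direct count $N_{fast}(x,y,1)\leq \lfloor 1/x\rfloor \leq MAX_{fast}(x,y,1)$.
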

We remark that the upper bound in both cases is likely far from optimal, but will be sufficient for our desired claims. By definition, the slow and fast approximating sequences are given by $N_{slow}(x,y,k)$ and $N_{fast}(x,y,k)$, and the orbit of $y$ is encoded through these lengths by the relevant words $\rho$. We may now state the objective of this section:

\begin{corollary}\label{cor:approximating sequence growth rates}
There is a generic growth rate of the fast-approximating times $N_{fast}(x,y,k)$: there exists some $C>1$ so that for almost every choice of $x,y$, we have
\[ \lim_{k \rightarrow \infty} \frac{1}{k} \log\left( N_{fast}(x,y,k) \right) = C.\]
However, the slow-approximating times $N_{slow}(x,y,k)$ have no such generic growth rate: for any $C>1$, for almost all $x,y$ we have either
\[ \liminf_{k \rightarrow \infty} \frac{N_{slow}(x,y,k)}{C^k} = 0 \qquad \textit{or} \qquad \limsup_{k \rightarrow \infty} \frac{N_{slow}(x,y,k)}{C^k} = \infty.\]
\begin{proof}
By the multiplicative ergodic theorem, \Cref{theorem - T_fast is ergodic}, and \Cref{theorem - lambda integrable}, there is some $C>1$ so that almost surely both
\[ \lim_{k \rightarrow \infty} \frac{1}{k}\log \| \sigma_{fast}(x,y,k)(A)\| = C = \lim_{k \rightarrow \infty} \frac{1}{k}\log \| \sigma_{fast}(x,y,k)(B)\|,\]
from which we conclude that
\[ \lim_{k \rightarrow \infty} \frac{1}{k} \log min_{fast}(x,y,k) = C = \lim_{k \rightarrow \infty}\frac{1}{k} \log MAX_{fast}(x,y,k).\]
We then recall the estimates of \Cref{lemma:word growth estimates}, and the first result is shown.

The second claim is then a standard result in theory of transformations which preserve an infinite measure, see e.g \cite[Theorem 2.4.2]{aaronson1997introduction}. 
\end{proof}
\end{corollary}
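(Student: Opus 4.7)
The plan is to reduce both claims to statements about operator-norm growth of the substitution matrices $M_{fast}(x,y,k)$ and $M_{slow}(x,y,k)$, and then invoke the multiplicative ergodic theorem in the fast (finite-measure) case versus a nonexistence-of-norming-constants result in the slow (infinite-measure) case. \Cref{lemma:word growth estimates} provides the sandwich
\[
min_{\ast}(x,y,k)\leq N_{\ast}(x,y,k+1)\leq (k+1)\cdot MAX_{\ast}(x,y,k+1)
\]
for $\ast\in\{slow,fast\}$; the factor $(k+1)$ washes out under $\tfrac{1}{k}\log$, so it suffices to track the exponential growth of $\|\sigma_{\ast}(x,y,k)(A)\|$ and $\|\sigma_{\ast}(x,y,k)(B)\|$, each of which is comparable to $\|M_{\ast}(x,y,k)\|$ once one verifies that both columns grow at the same exponential rate.

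For the fast assertion, I would observe that $\log\|M_{fast}(x,y,k)\|$ is a subadditive cocycle over $\hat{T}_{fast}$, with the ergodic invariant probability measure $\hat{\mu}$ supplied by \Cref{theorem - T_fast is ergodic} and the requisite log-integrability of $\|M_{fast}^{\pm 1}\|$ supplied by \Cref{theorem - lambda integrable}. Kingman's subadditive ergodic theorem (equivalently Furstenberg--Kesten) then produces a deterministic Lyapunov exponent $C$ with
\[
\lim_{k\to\infty}\tfrac{1}{k}\log\|M_{fast}(x,y,k)\|=C
\]
for $\hat\mu$-a.e., and hence Lebesgue-a.e., $(x,y)$; the inequality $C>1$ is extracted from the explicit eigenvalue lower bound $\lambda_+\geq\varphi$ visible in the computations preceding \Cref{theorem - lambda integrable}. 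Because each $M_{fast}$ has nonnegative integer entries and any two-fold product is strictly positive, a Perron--Frobenius style contraction argument guarantees that for every positive vector $v$, $\|M_{fast}(x,y,k)v\|$ grows at the same top rate $C$; applied to the coordinate vectors, this shows $\|\sigma_{fast}(x,y,k)(A)\|$ and $\|\sigma_{fast}(x,y,k)(B)\|$ both realize the exponent $C$, and the sandwich above completes the fast claim.

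For the slow claim, \Cref{corollary - T_slow not ergodic} tells us $\hat{T}_{slow}$ admits no finite absolutely continuous invariant measure but does preserve an infinite $\sigma$-finite one that is mutually absolutely continuous with Lebesgue. Up to the linear slack from \Cref{lemma:word growth estimates}, $\log N_{slow}(x,y,k)$ is an additive cocycle of $\log\|M_{slow}\|$ along $\hat{T}_{slow}$-orbits. If $N_{slow}(x,y,k)/C^k$ admitted a finite nonzero limit on a set of positive Lebesgue measure for some $C>1$, this would provide deterministic norming constants for an additive cocycle over a conservative ergodic infinite-measure-preserving transformation, which is ruled out by Aaronson's theorem \cite[Theorem 2.4.2]{aaronson1997introduction}. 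The same theorem delivers the $\liminf/\limsup$ zero--infinity dichotomy stated in the corollary.

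The main obstacle in the fast part is showing that \emph{both} $\sigma_{fast}(x,y,k)(A)$ and $\sigma_{fast}(x,y,k)(B)$ attain the top Lyapunov exponent rather than one of them collapsing to a smaller exponent along an invariant subspace; positivity of the cocycle, together with the observation that a two-fold product of $M_{fast}$'s has all strictly positive entries, makes this a routine Perron--Frobenius computation, but it still must be recorded carefully. A secondary subtlety, for the slow part, is verifying that Aaronson's nonexistence theorem, typically phrased for Birkhoff sums of $L^1$ functions, genuinely forbids the postulated exponential rate for the subadditive quantity $\log N_{slow}$; the polynomial slack of \Cref{lemma:word growth estimates} provides the room required to reduce the subadditive problem to the additive statement.
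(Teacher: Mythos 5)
Your proposal follows the paper's argument essentially verbatim: the sandwich of \Cref{lemma:word growth estimates} reduces both claims to the exponential growth of the substitution-matrix cocycle, the multiplicative/subadditive ergodic theorem (fed by \Cref{theorem - T_fast is ergodic} and \Cref{theorem - lambda integrable}) yields the generic rate in the fast case, and Aaronson's theorem disposes of the slow case; your added Perron--Frobenius step showing that both $\|\sigma_{fast}(x,y,k)(A)\|$ and $\|\sigma_{fast}(x,y,k)(B)\|$ attain the top exponent is a worthwhile detail the paper elides. One small correction: the eigenvalue bound $\lambda_+\geq(1+\sqrt{5})/2$ only gives $C\geq\log\bigl((1+\sqrt{5})/2\bigr)\approx 0.48$, not $C>1$; the paper's justification for $C>1$ comes from the identification $C=\pi^2/(12\log 2)$ in \cite{IN} rather than from any pointwise eigenvalue estimate.
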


It is proved in \cite{IN} that $C=\pi^2/(12 \log 2)$, the almost-sure base of the growth rate of the denominators $q_n$ in the convergents of the standard continued fraction representation of $x$. Recall that what we call the fast approximating sequence for $(x,y)$ is what in that reference is called the canonical approximating sequence for $(x,1-y)$, but results for generic rates of growth are equivalent.
\section{Relationships to Continued Fractions}\label{section - relation to continued fractions}

In this section we do not first pick a $y$ and follow the algorithm to see when to follow the green edge and when to follow the red edge. Rather, we follow certain methods for choosing whether to construct the first-return map on $[0,1-x]$, or whether to reverse orientation and construct the first-return map on $[1-x,1]$. Different systems for making these choices will be shown to correspond to different continued fraction systems. The unique point of intersection of these nested intervals therefore provides a special point whose orbit is encoded by substitutions that are related to these continued fraction systems. To avoid unnecessary ambiguity we assume that  $x\in[0,1]\setminus \mathbb{Q}$.

Suppose the regular continued fraction expansion of $x$ is given by
\[x = \cfrac{1}{a_1+\cfrac{1}{a_2+\cfrac{1}{a_3+\ddots}}}=[a_1,a_2,a_3,\ldots].\]
Then we see that
\begin{equation}\label{eq:subtraction} 1-x = \begin{cases} [a_2+1,a_3,\ldots] & (a_1=1), \\ [1,a_1-1,a_2,\ldots] & (a_1 \neq 1). \end{cases}\end{equation}

Accordingly, since certainly $x=1-(1-x)$, we find that ``following the red edge" exactly corresponds to performing singularization or insertion on the continued fraction expansion of $x$; singularization when some $a_n=1$, insertion when $a_n \neq 1$.
To be more precise, a singularization is obtained by using the following formula for $a,b\in\mathbb{N}_{>0}$ and $\varepsilon=\pm 1$
\begin{equation}\label{eq:sing}
    a+\frac{\varepsilon}{\displaystyle1+\frac{1}{b+\xi}} =a+\varepsilon+\frac{-\varepsilon}{b+1+\xi}.
\end{equation}
  
Taking $a=1$ and $\varepsilon=-1$ we find the continued fraction of $1-x$. We see that $T^2(x)=T(1-x)$ so inducing on the interval $[x,1]$ will `go faster' in the sense that the associated return times are larger. This corresponds to the fact that we are constructing the return map on the smaller interval ($a_1=1$ gives $x>\frac{1}{2}$).
When $a_1>1$ we get insertion. The formula for insertion where $a,b\in\mathbb{N}_{>0}$ with $b\geq 2$ and $\xi\in [0,1]$ is given by
\begin{equation}\label{eq:ins}
a+\frac{1}{b+\xi}=a+1+\frac{-1}{\displaystyle 1+\frac{1}{\displaystyle b-1+\xi}}.
\end{equation}
Taking $a=0$ we get the continued fraction of $x$ on the left hand side and $1-(1-x)$ on the right hand side when $a_1\neq 1$. Here we see that $T(x)=T^2(1-x)$ which means we `went slower' in the sense of smaller return times: we constructed the return map on the larger interval.
By using singularizations and insertions one can find all semi-regular continued fractions of a  number \cite{DK,K}. That is, in case of irrational numbers, all expansions with numerators $\varepsilon_n\in\{-1,1\}$, digits $\hat{a}_n\in\mathbb{N}$ and for all $n$, $\hat{a}_n+\varepsilon_n\geq 1$, and infinitely often $\hat{a}_n+\varepsilon_n\geq 2$. Each such representation of $x$ corresponds to a certain choice of $y$; the $y \in [0,1]$ for which our algorithm would transform the standard continued fraction expansion of $x$ into the desired form: $\hat{a}_n$ in the modified continued fraction are exactly the return times $\hat{a}_{n}$ from \Cref{figure - graph presentation}. Conversely, all such $y$ generate a semi-regular continued fraction representation of $x$. We therefore have a canonical correspondence between all semi-regular continued fraction representations of some $x$ (with numerators $\pm 1$) and the interval $[0,1]$ (with left/right versions of those $y \in x \mathbb{Z}$):
\begin{theorem}\label{theorem:all CF expansions}
    Every $y\in[0,1]$ generates exactly one semi-regular continued fraction of $x$ and for every semi-regular continued fraction of $x$ there is exactly one $y\in[0,1]$ that generates it.
\end{theorem}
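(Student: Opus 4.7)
The plan is to establish the bijection by treating the forward map $\Phi \colon y \mapsto $ (semi-regular continued fraction of $x$) as the deterministic algorithm of \Cref{figure - graph presentation}, and then constructing an explicit inverse via nested intervals. For the forward direction, for any $y \in [0,1]$ (with the left/right convention at $y \in x\mathbb{Z}$ from \Cref{sec:TslowTfast}), iteration of $\hat{T}_{slow}$ produces a unique sequence of digits $\hat{a}_n$ and edge choices. I would then interpret this sequence as a semi-regular continued fraction for $x$ by reading \eqref{eq:subtraction} as the rewriting rule applied at each step: a green edge reads off the next regular partial quotient, while a red edge is a singularization \eqref{eq:sing} (when $\hat{a}_n = 1$) or an insertion \eqref{eq:ins} (when $\hat{a}_n \geq 2$). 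Each such rewriting preserves the numerical value, so the accumulated partial quotients and signs represent $x$. The required conditions $\varepsilon_n \in \{\pm 1\}$, $\hat{a}_n + \varepsilon_n \geq 1$ for all $n$, and $\hat{a}_n + \varepsilon_n \geq 2$ infinitely often follow from the structure of the algorithm together with \Cref{lemma - fast returns defined}, which precisely rules out the edge sequences that would otherwise violate the infinitely-often condition.

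For the inverse direction, given a semi-regular continued fraction of $x$, the singularization/insertion identities \eqref{eq:sing} and \eqref{eq:ins} uniquely prescribe a sequence of edge choices $(\varepsilon_n, \hat{a}_n)$ in \Cref{figure - graph presentation}. Each choice at step $n$ further specifies which sub-interval of $[0,1]$ contains $y$: a green choice asserts that the pulled-back $y$-coordinate lies above the threshold $1-x_n$, a red choice asserts the opposite, producing the nested chain $\{I_{slow}(k)\}$ described in \Cref{section - symbolic coding}. Setting $y^\ast = \bigcap_k I_{slow}(k)$ gives a candidate, and feeding $y^\ast$ back into the forward algorithm recovers the prescribed continued fraction, establishing existence. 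Uniqueness follows because any distinct $y$ and $y'$ must eventually lie on opposite sides of some $-nx \mod 1$, hence belong to different atoms of the partition in \Cref{figure - markov partition for slow map} at some finite stage, producing different edge sequences and therefore different continued fractions.

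The main obstacle is verifying that $\bigcap_k I_{slow}(k)$ is a single point, ensuring $y^\ast$ is well-defined. Here I would invoke the expansivity argument from the proof of \Cref{theorem - T_fast is ergodic}: the second iterate of $\hat{T}_{fast}$ is uniformly expanding with factor strictly greater than $2$, so along the subsequence $k_j$ corresponding to iterations of $\hat{T}_{fast}$ the intervals $I_{fast}(j) = I_{slow}(k_j)$ shrink geometrically in length, forcing the intersection to be a single point. In the exceptional case that this point lies in $x\mathbb{Z}$, the left/right convention introduced at the start of \Cref{sec:TslowTfast} keeps the two possible nested chains (and hence the two possible values of $y$) distinct, preserving the bijection on the extended $y$-space.
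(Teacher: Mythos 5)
Your argument follows the same route the paper intends (its written proof is deliberately terse, deferring to the preceding discussion of singularizations and insertions): each green edge reads off a regular partial quotient, each red edge is a value-preserving rewriting via \eqref{eq:sing} or \eqref{eq:ins}, the classical fact that all semi-regular expansions of $x$ arise from such rewritings gives surjectivity, and the nested intervals $I_{slow}(k)$ recover the unique $y$. The supplementary details you supply --- \Cref{lemma - fast returns defined} to secure the ``infinitely often $\hat{a}_n+\varepsilon_n\geq 2$'' condition, and the uniform expansion of $\hat{T}^2_{fast}$ to force $\bigcap_k I_{slow}(k)$ to be a single point --- are exactly the right ingredients and are sound. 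One correction: you have the two red-edge cases reversed. By \eqref{eqn:first appearance of insertion/deletion} and \lref{lemma - fast returns defined}, a red edge performs a \emph{singularization} precisely when the current first partial quotient equals $1$ (so $x_n>1/2$), and then $\hat{a}_{n+1}=\floor{1/(1-x_n)}\geq 2$; it performs an \emph{insertion} when the first partial quotient is at least $2$, and then $\hat{a}_{n+1}=1$. This mislabeling does not damage the structure of the bijection, since both operations preserve the value of $x$, but it matters for the ``infinitely often'' verification: what \lref{lemma - fast returns defined} actually rules out is an infinite tail of consecutive \emph{insertions} (red edges with $\hat{a}=1$), which is exactly the forbidden tail of digits $1$ with numerator $-1$ in a semi-regular expansion.
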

\begin{proof}
    The proof is clear from the previous discussion. 
\end{proof}

Another way of generating semi-regular continued fractions is to pick the numerators $\varepsilon_n$ at random. This is done, for example, in \cite{KKV}. Note that heuristic arguments about `random' choices of $\varepsilon_n$ will not necessarily agree with almost-sure results for $\hat{T}_{fast}$, as singularization/insertion are performed by $\hat{T}_{slow}$ for any $y \leq 1-x$. When the first partial quotient of $x$ is very large, the probability of singularization/insertion under $\hat{T}_{slow}$ is correspondingly very large.

Singularizations and insertions are local operations that can be applied to any semi-regular continued fraction of $x$. If 
\[
x=[1/d_1,\varepsilon_1/d_2,\ldots,\varepsilon_{n-1}/d_n, \varepsilon_n/1,1/d_{n+2},\ldots]
\]
then applying a singularization at $d_{n+1}=1$ gives
\[
x=[1/d_1,\varepsilon_1/d_2,\ldots,\varepsilon_{n-1}/(d_n+\varepsilon_n), -1/d_{n+2},\ldots].
\]
On the other hand when
\[
x=[1/d_1,\varepsilon_1/d_2,\ldots,\varepsilon_{n-1}/d_n, 1/d_{n+1},\varepsilon_{n+1}/d_{n+2},\ldots]
\]
then, when $d_{n+1}\geq 2$, insertion after $d_n$ gives us
\[
x=[1/d_1,\varepsilon_1/d_2,\ldots,\varepsilon_{n-1}/(d_n+1),-1/1, 1/(d_{n+1}-1),\varepsilon_{n+1}/d_{n+2},\ldots].
\]
Furthermore, these operations affect the convergents in the following way.
Write $\frac{p_n(x)}{q_n(x)}=[1/d_1,\varepsilon_1/d_2,\ldots,\varepsilon_{n-1}/d_n]$. Then performing a singularization at place $n$ will result in deleting  the element $\frac{p_n}{q_n}$ in the list of convergents, i.e. $\frac{p_1}{q_1},\frac{p_2}{q_2},\ldots \frac{p_{n-1}}{q_{n-1}},\frac{p_{n+1}}{q_{n+1}},\ldots$.
On the other hand insertion will add a convergent $\frac{p_1}{q_1},\frac{p_2}{q_2},\ldots \frac{p_{n-1}}{q_{n-1}},\frac{p_{n-1}+p_{n}}{q_{n-1}+q_n},\frac{p_{n}}{q_{n}},\ldots$. The numerators $p_n$ and denominators $q_n$ for the altered, semi-regular expansion of $x$, satisfy the following convergent relations

\begin{equation}\label{convergents}
	    \begin{split}
		    p_{-1}=1, \quad p_0=0, \quad p_n=d_np_{n-1}+\varepsilon_{n-1} p_{n-2},\\
	        q_{-1}=0, \quad q_0=1, \quad q_n=d_nq_{n-1}+\varepsilon_{n-1} q_{n-2}.
		\end{split}
	\end{equation}
Deciding when to do an insertion or an insertion is dictated by $y\in[0,1]$. 
With this in mind we define $q_{n,y}(x)$ to be the denominators of the sequence of convergents of $x$ where the places of insertion and singularization are described by the orbit of $(x,y)$ when iterating over $\hat{T}_{slow}$. Furthermore, we can look at those $n_k$ such that correspond to the accelerated system $T_{fast}$; let this be $\overline{q_{k,y}}(x)=q_{n_k,y}(x)$. Observe that as $\hat{T}_{fast}$ is defined as iterating $\hat{T}_{slow}$ until either $y \geq (1-x)$ (which corresponded to regular progression in the convergents of $x$) or $y \leq 1-x$ and $x \geq 1/2$ (which corresponded to singularization), the $\overline{q_{k,y}}(x)$ correspond to those $q_{n,y}(x)$ which are \textit{not} due to insertion. We are interested in how fast $q_{n,y}(x)$ and $\overline{q_{n,y}}(x)$ typically grow. To this end, let us look  at the inverse branches of $\hat{T}_{slow}$ as M\"obius transformations acting on the matrix  with the convergents. To be more precise, 
\[
\left[\begin{array}{cc}
a & b \\
c & d 
\end{array}\right](x):=\frac{ax+b}{cx+d}.
\]
Then the inverse branches of $\hat{T}_{slow}$ are given by
\[
A_{x,y}:=\begin{cases}
    \left[\begin{array}{cc}
0 & 1 \\
1 & \hat{a}_1(x,y)
\end{array}\right] & \text{when } y\in [1-x,1]\\[1em]
\left[\begin{array}{cc}
1 & \hat{a}_1(x,y) -1 \\
1 & \hat{a}_1(x,y) 
\end{array}\right] &  \text{when } y\in [0,1-x].
\end{cases}
\]
Let \[
M_{0,x,y}=
\left[\begin{array}{cc}
1 & 0 \\
0 & 1
\end{array}\right]
=
\left[\begin{array}{cc}
p_{-1} & p_0\\
q_{-1} & q_0 
\end{array}\right]
\]
and $M_{n+1,x,y}=M_{n,x,y}A_{T_{slow}^n(x,y)}$ for $n\geq 1$.
Now when  $y_n\in [1-x_n,1] $ we find 
\[
M_{n+1,x,y}=
\left[\begin{array}{cc}
p_{n-1} & p_n \\
q_{n-1} & q_n
\end{array}\right]
\left[\begin{array}{cc}
0 & 1 \\
1 & \hat{a}_{n+1}(x,y)
\end{array}\right]
=
\left[\begin{array}{cc}
p_n & p_{n+1} \\
q_n & q_{n+1}
\end{array}\right]
\]
just like in the regular case (since we applied $T(x)$). Note that the index of the convergents may not be the same as for the convergents of it's regular continued fraction. 

Now when  $y_n\in [0,1-x_n] $, 
if $\hat{a}_{n+1}(x,y)=1$ we find 
\[
M_{n+1,x,y}=
\left[\begin{array}{cc}
p_{n-1} & p_n \\
q_{n-1} & q_n
\end{array}\right]
\left[\begin{array}{cc}
1 & 0 \\
1 & 1
\end{array}\right]
=
\left[\begin{array}{cc}
p_{n-1}+p_n & p_n \\
q_{n-1}+q_n & q_n
\end{array}\right].
\]
We see that we did not update the second column but instead replaced the first by the mediant (corresponding to adding the mediant in the list of convergents). To illustrate what would happen in case of a singularization, for ease of notation assume that the matrix $M_{n,x,y}$ has the convergents of the regular continued fraction as columns for $n-1$ and $n$ and we will be using the $n+1$'th digit of $x$ in the next step.
When $a_{n+1}(x)=1$ then $a_{n+1}(1-x)=a_{n+2}(x)+1$. We find
\[
\left[\begin{array}{cc}
p_{n-1} & p_n \\
q_{n-1} & q_n
\end{array}\right]
\left[\begin{array}{cc}
1 & a_{n+2}(x) \\
1 & a_{n+2}(x)+1
\end{array}\right]
=
\left[\begin{array}{cc}
p_{n-1}+p_n & a_{n+2}(x)p_{n-1}+(a_{n+2}(x)+1)p_n \\
q_{n-1}+q_n & a_{n+2}(x)q_{n-1}+(a_{n+2}(x)+1)q_n
\end{array}\right]=
\left[\begin{array}{cc}
p_{n+1} & p_{n+2} \\
q_{n+1} & q_{n+2}
\end{array}\right].
\]
Here we see that we skipped a convergent because of a singularization. (Here the index might have shifted as well.) Since we now know how to relate the inverse branches to the sequence of convergents $q_{n,y}(x)$ and, in the same way, $\overline{q_{n,y}}(x)$ we are in a position to proving the following:

\begin{theorem}\label{theorem:generic denominator growth}
There is a generic growth rate of  $\overline{q_{n,y}}(x)$: there exists some $C>1$ so that for almost every choice of $x,y$, we have
\[ \lim_{n \rightarrow \infty} \frac{1}{n} \log\left( \overline{q_{n,y}}(x) \right) = C.\]
However, for $q_{n,y}(x)$ have no such generic growth rate: for any $C>1$, for almost all $x,y$ we have 
\[ 
\liminf_{n \rightarrow \infty} \frac{q_{n,y}(x) }{C^n} = 0. 
\]

\end{theorem}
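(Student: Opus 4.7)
The strategy is to reduce both claims to \Cref{cor:approximating sequence growth rates} by identifying the denominators $\overline{q_{n,y}}(x)$ with the fast-approximating times $N_{fast}(x,y,n)$, and $q_{n,y}(x)$ with the slow-approximating times $N_{slow}(x,y,n)$, up to factors which are irrelevant for logarithmic growth rates. Both families of quantities record essentially the same information, namely return times of the backwards orbit of the origin under rotation by $x$ to the endpoints of the nested intervals $I_{fast}(n)$ and $I_{slow}(n)$.

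Concretely, I would first use the matrix formalism introduced immediately before the theorem to write $q_{n,y}(x)$ as an entry of the product $M_{n,x,y}=A_{x,y}\cdot A_{\hat{T}_{slow}(x,y)}\cdots A_{\hat{T}_{slow}^{n-1}(x,y)}$. Grouping consecutive ``insertion'' matrices (the red edges with $\hat{a}=1$) into single $\hat{T}_{fast}$-blocks produces a matrix cocycle $\tilde{A}$ over $\hat{T}_{fast}$ whose iterated product carries $\overline{q_{n,y}}(x)$ as one of its entries. These matrices have non-negative integer entries, and their column sums are exactly the word lengths $\|\sigma_{fast}(x,y,n)(A)\|$ and $\|\sigma_{fast}(x,y,n)(B)\|$, by the very construction of $\sigma_{fast}$. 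Combining this with \Cref{lemma - fast encoding lemma} and the sandwich estimates of \Cref{lemma:word growth estimates}, one obtains that $\frac{1}{n}\log \overline{q_{n,y}}(x)$ and $\frac{1}{n}\log N_{fast}(x,y,n)$ have the same almost-sure limit. The first claim then follows from the first part of \Cref{cor:approximating sequence growth rates} with the same constant $C$.

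For the second claim, the analogous comparison of $q_{n,y}(x)$ with $N_{slow}(x,y,n)$ via the ungrouped matrix products, combined with the second part of \Cref{cor:approximating sequence growth rates}, yields the failure of a generic exponential growth rate. The $\liminf=0$ alternative in particular can be extracted because the matrix entries tracking $q_{n,y}$ are bounded below by $1$ (so if $\limsup q_{n,y}(x)/C^n = \infty$ held on a set of positive measure, the corresponding $\liminf$ statement for a smaller constant $C' < C$ would have to yield $0$ generically via the infinite-measure dichotomy); alternatively, since $\hat{T}_{slow}$ preserves only an infinite $\sigma$-finite measure by \Cref{corollary - T_slow not ergodic}, the standard results on pointwise ergodic averages for infinite-measure-preserving transformations from \cite{aaronson1997introduction} give the $\liminf=0$ conclusion directly.

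The main obstacle is the careful identification of $\overline{q_{n,y}}(x)$ with the encoding-word lengths $\|\rho_{fast,\ell,n}\|$. The substitution matrices $M_{fast}$ appearing in \Cref{theorem - lambda integrable} and the inverse-branch matrices $\tilde{A}$ produced above are not literally equal: they differ by a bounded change of basis reflecting the distinction between tracking \emph{left versus right endpoint} of $I_{fast}(n)$ and tracking \emph{first versus second column} of the convergent matrix. Once this bookkeeping is settled, the multiplicative ergodic theorem (invoked via \Cref{theorem - T_fast is ergodic} and \Cref{theorem - lambda integrable}) delivers the first claim with a definite Lyapunov exponent $C$, and the infinite-measure framework underlying the second part of \Cref{cor:approximating sequence growth rates} delivers the failure of generic growth for $q_{n,y}(x)$.
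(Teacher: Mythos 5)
Your overall strategy --- reducing the theorem to \Cref{cor:approximating sequence growth rates} by identifying $\overline{q_{n,y}}(x)$ with the fast-approximating times $N_{fast}(x,y,n)$ --- is genuinely different from the paper's. The paper never passes back through the word lengths: it computes the grouped inverse-branch matrices $\overline{A}_{x,y}$ explicitly on each partition element, obtaining $\left[\begin{smallmatrix}0&1\\1&n\end{smallmatrix}\right]$ on $PG_{i,n}$ and $\left[\begin{smallmatrix}1&i\\n&ni+1\end{smallmatrix}\right]$ on $PR_{i,n}$, verifies integrability of $\log^+\|\overline{A}_{x,y}\|$ and $\log^+\|\overline{A}_{x,y}^{\,-1}\|$ by comparing their eigenvalues with the bounds already established for $M_{fast}$ in \Cref{theorem - lambda integrable}, and then applies the multiplicative ergodic theorem directly to this new cocycle over $\hat{T}_{fast}$. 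In particular it only asserts that \emph{some} exponent $C$ exists; it does not claim this is the constant of \Cref{cor:approximating sequence growth rates}.

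The gap in your route is precisely the identification step you flag as ``the main obstacle,'' and it is not merely bookkeeping. It is false that the column sums of the grouped matrices are the word lengths $\|\sigma_{fast}(x,y)(A)\|$ and $\|\sigma_{fast}(x,y)(B)\|$: on $PG_{i,n}$ the column sums of $\overline{A}_{x,y}$ are $1$ and $n+1$ independently of $i$, whereas the column sums of $M_{fast}(x,y)$ from \eqref{eqn:matrix powers computed} are $1+(n-i)(i+1)$ and $n-i$, which grow with $i$. Nor are the two cocycles related by a bounded change of basis: on $PR_{i,n}$ the two blocks are transposes of one another, and transposition reverses the order of a product, so nothing telescopes. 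Hence your assertion that the limit is ``the same constant $C$'' as in \Cref{cor:approximating sequence growth rates} is unsupported; making your reduction work would require a separate argument that $\overline{q_{n,y}}(x)$ and $N_{fast}(x,y,n)$ agree up to factors subexponential in $n$ (e.g.\ that both are comparable to the return times of $I_{fast}(n)$), which you do not supply. For the second claim, note also that the theorem asserts specifically $\liminf q_{n,y}(x)/C^n=0$, which is stronger than the either/or dichotomy in \Cref{cor:approximating sequence growth rates}; your parenthetical attempt to upgrade the dichotomy via a smaller constant does not work as stated, and your fallback appeal to the infinite invariant measure is no more than the paper's own informal remark at this point.
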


\begin{proof}
For the fast map, for $(x,y)\in PG_{i,n}$, let $n=a_1(x)=\floor{1/x}$. In $PG_{i,n}$ we first do $i$ insertions and then apply the Gauss map, so we find the following matrix multiplication
\[
\overline{A}_{x,y}:=
\left[\begin{array}{cc}
1 & 0 \\
1 & 1
\end{array}\right]^i
\left[\begin{array}{cc}
0 & 1 \\
1 & n-i
\end{array}\right]=
\left[\begin{array}{cc}
0 & 1 \\
1 & n
\end{array}\right]
\]
which is the same as applying the Gauss map directly. In this case we find the eigenvalues
\[
\frac{n\pm \sqrt{n^2+4}}{2}.
\]
When $(x,y)\in  PR_{i,n}$, then we have $n-1$ insertions followed by a singularization. Note that $a_2(x)=i$. We find the following matrix multiplication
\[
\overline{A}_{x,y}:=
\left[\begin{array}{cc}
1 & 0 \\
1 & 1
\end{array}\right]^{n-1}
\left[\begin{array}{cc}
1 & i \\
1 & i+1
\end{array}\right]=
\left[\begin{array}{cc}
1 & i \\
n & ni+1
\end{array}\right].
\]
The eigenvalues are
\[
\frac{ (ni+2) \pm \sqrt{(ni+2)^2 -4}}{2}.
\]
We see that we are almost in the same situation as for $M_{fast}(x,y)$, see \eqref{eigMfast}. Here, though, the principal eigenvalues are smaller in case $(x,y)\in PR_{i,n}$. This gives us the integrability of $\log^+\|\overline{A}_{x,y} \|$. The second eigenvalue is larger than the case of $M_{fast}(x,y)$ and therefore the norm of the inverse is smaller. This gives us integrability of  $\log^+\|\overline{A}_{x,y}^{\, -1} \|$.
The integrability of the logarithm of these norms, the multiplicative ergodic theorem, and the relation of the matrices with the sequences $\overline{q_{n,y}}(x)$ gives us the desired result. The second statement in the theorem follows from the fact that $T_{slow}$ has an infinite invariant measure. Informally put, $\hat{T}_{slow}$ will perform `too many insertions of mediants' to permit any generic exponential growth rate.
\end{proof}

\subsection{Standard Continued Fractions - \emph{Always Follow the Green Edge}}

When we construct the first-return map on $[1-x,1]$, we get a rotation by $T(x)$, the Gauss map. It follows that if $y_0$ is chosen so that $y_n$ is \emph{always} in the interval $[1-x_n,1]$, then $x_n = T^n(x_0)$ for every $n$.

Our procedure then amounts to ``within the interval whose length is the rotation amount $x$, we construct the first-return map on the interval which is the first $T(x)$ proportion, and on this interval we reverse orientation." Let us let $I_n$ be the interval in the original interval $[0,1]$ on which we are constructing a first-return map after $n$ consecutive iterations: the first-return map on $I_n$ is therefore isomorphic to rotation by $T^n(x)$.

So beginning with the interval 
\[I_1=[1-x,1]\] with forward orientation, our next interval would be of length $xT(x)$ and sharing the left endpoint with the previous interval: 
\[I_2=[1-x,1-x+xT(x)].\] In our first-return map our orientation has been reversed, so our third interval will therefore be of length $xT(x)T^2(x)$, but sharing the \emph{right} endpoint of the previous interval: 
\[I_3=[1-x+xT(x)-xT(x)T^2(x),1-x+xT(x)].\] Next, the interval will be of length $xT(x)T^2(x)T^3(x)$, but sharing the \emph{left} endpoint, for a result of 
\[I_4=[1-x+xT(x)-xT(x)T^2(x),1-x+xT(x)-xT(x)T^2(x)+xT(x)T^2(x)T^3(x)].\]
Continuing in this fashion, we see inductively that $I_n$ is of length $xT(x)\cdots T^{n-1}(x)$, and that if we let $y_g$ be the unique point of intersection, then
\[ y_g=\bigcap_{i=1}^{\infty} I_n = 1-x+xT(x)-xT(x)T^2(x)+xT(x)T^2(x)T^3(x)-\cdots\]
So $y_g$ is the unique point in $[0,1]$ whose orbit is exactly encoded by 
\[\omega = \lim_{n \rightarrow \infty} \sigma_{[1,n)}(A),\]
where each 
\[
\sigma_i: \left\{ 
\begin{array}{l}
A \mapsto AB^{a_i} \\ B \mapsto AB^{a_i-1}
\end{array}\right.,
\]
and the $a_i$ are given by the standard continued fraction representation $x = [0;a_1,a_2,\ldots]$. Also note that this is the unique $y$ such that $\hat{T}^n_{slow}(x,y)\in G$ for all $n\in \mathbb{N}$.

In the special case where $x$ has a purely periodic continued fraction expansion with period one, this point and the substitutions are fairly easy to compute. Suppose that
\[ x = \cfrac{1}{a+\cfrac{1}{a+\cfrac{1}{a+\ddots}}} = [a,a,a,\ldots] = \frac{\sqrt{a^2+4}-a}{2}.\]
The point $y_g$ can then be found to be given by
\[y_g = 1-x+x^2-x^3+x^4+\cdots = \frac{1}{1+x}\]
Then the only substitution is
\[\sigma: \left\{ \begin{array}{l}A \mapsto AB^a \\ B \mapsto AB^{a-1}\end{array}\right.,\]
and the orbit of $y_g$ is encoded by 
\[
\lim_{n \rightarrow \infty} \sigma^n(A).
\]
Furthermore, for almost every $x$ we have
\[
\lim_{n\to \infty} \frac{1}{n}\log (q_{n,y_g} )=\frac{\pi^2}{12\log(2)}=\frac{1}{2}h_\mu(T)
\]
where $h_\mu(T)$ is the measure theoretic entropy with respect to the invariant measure $\mu$.
This is a classical result and can be found for example in \cite{DK2,DKa} Note that we also have $\hat{T}_{fast}^n(x,y_g)=\hat{T}_{slow}^n(x,y_g)$ so that $q_{n,y_g}(x)=\overline{q_{n,y_g}}(x)$ for all  $x\in[0,1]\backslash \mathbb{Q}$.

\subsection{Other continued fractions}
For other choices of $y$ we will get different semi-regular continued fractions for $x$. By choosing $y$ in a specific manner according to the desired sequence of `red edges' and `green edges,' we specify a sequence of desired insertions/singularizations, and therefore we can pick $y$ to produce certain well-studied continued fraction expansions. We briefly discuss several examples.
\subsubsection{The backward continued fractions -\emph{Always follow the red edge!} }

When always following the red edge we find that the first return map on $I_n$ is isomorphic to the rotation by $\tilde{T}^n(x)$ where $\tilde{T}(x)=1-T(x)$. The map $\tilde{T}$ is isomorphic to the map that generates the backward continued fractions introduced in 1957 by R\'enyi \cite{R}, for which the numerators are always $-1$. Because we never reverse orientation in this scheme, our intervals $I_n$ will always share their left endpoint with $I_{n-1}$. The point on intersection is therefore the left endpoint of the original interval: we are producing substitutions which encode the orbit of the point $y=0$. Another way to determine $y$ is by looking at the map $\hat{T}_{slow}$. When for every $n\in\mathbb{N}$ we have that $\hat{T}^n_{slow}(x,y)\in[0,1]\times[0,1-x]$ then we always follow the red edge. For $y=0$ we have $\hat{T}^n_{slow}(x,y)= T(1-T(1-T(1-\cdots T(1-x))),0)$. Likewise, we find for $y=0$ we have $\hat{T}^n_{fast}(x,y)=(T^{2n}(x),0)$ for every $n\in\mathbb{N}$. The \textit{fast} approximating sequence is therefore the sequence $\overline{q_{n,y}}(x)=q_{2n}(x)$, the denominators of the even-index convergents of the standard continued fraction expansion of $x$. Equivalently, $I_{slow}(n)=[0,\|q_{2n}x\|]$. Note that for $y=1$ we have $\hat{T}_{slow}(x,1)=(T(x),0)$. Therefore, we find $\overline{q_{n,1}}(x)=q_{2n+1}(x)$, all odd-index convergents, and $I_{slow}(n)=[1-\|q_{2n+1}x\|,1]$.

\subsubsection{\texorpdfstring{$\alpha$}{}-Continued Fractions - \emph{Follow the Green Edge Unless \texorpdfstring{$x_n>\alpha$}{}!}}
Let $\alpha\in[0,1]$. In \Cref{figure - graph presentation}, follow the green arrow if $x_n\leq \alpha$, otherwise follow the red arrow. Correspondingly, we perform singularization/insertion exactly when $x_n > \alpha$, as in \cite{DHKM}. This results in the $\alpha$-continued fraction of $x$, introduced in \cite{N}. Note that when $\alpha=1$ we always follow the green edge and find the regular continued fraction and in case $\alpha=0$ we always follow the red edge and we find the backward continued fractions. Write $y_\alpha$, which also depends on $x$, for the corresponding $y$ value. This is the unique $y$ such that $\hat{T}_{slow}^n(x,y)\in [0,\alpha]\times [1-x,1]\cup (\alpha,1]\times [0,1-x]$. Furthermore, for almost all $x\in[0,1]$ we have 
\[
\lim_{n\to \infty} \frac{1}{n}\log (q_{n,y_\alpha} )=\frac{1}{2}h_{\mu_\alpha}(T_\alpha)
\]
where $T_\alpha$ is Nakada's $\alpha$-continued fraction map and $\mu_\alpha$ it's invariant measure that is absolutely continuous with respect to the Lebesgue measure. The map $h(\alpha)=h_{\mu_\alpha}(T_\alpha)$ is very well studied, see for example \cite{CT12,CT13,KSS,LM,NN08}. 

\subsubsection{Nearest Integer CF - \emph{Follow the Edge with the Shorter Interval!}}
For $\alpha=1/2$, the above reduces to ``follow the unique arrow for which $x_{n+1}<1/2$." This is equivalent to ``pick the substitutions whose word length grows the fastest" or ``construct the first-return map on whichever interval is smaller." The corresponding continued fraction representation of $x$ is the unique such expansion for which no partial quotient is ever one. We find the unique $y$ such that $\hat{T}_{slow}^n(x,y)\in [0,\frac{1}{2}]\times [1-x,1]\cup (\frac{1}{2},1]\times [0,1-x]$. For almost all $x\in[0,1]$ we find 
\[
\lim_{n\to \infty} \frac{1}{n}\log (q_{n,y_\frac{1}{2}} )=\frac{1}{2}h_{\mu_\frac{1}{2}}(T_\frac{1}{2})=\frac{\pi^2}{12\log\left(\frac{\sqrt{5}-1}{2}\right)},
\]
 see for example \cite{CT13}.

\subsubsection{A natural counterpart to \texorpdfstring{$\alpha$}{}-Continued Fractions - \emph{Follow the Green Edge Unless \texorpdfstring{$x<\alpha$}{}!} }
Now instead of following the green arrow in  \Cref{figure - graph presentation} when $x\leq \alpha $ we follow  the \textit{red arrow} in that case. These continued fractions are studied in \cite{KLMM}. The digits of the resulting continued fraction expansions are bounded from above. The corresponding $y$ is the unique $y$ such that $\hat{T}_{slow}^n(x,y)\in [0,\alpha]\times [0,x]\cup (\alpha,1]\times [1-x,1]$. Such continued fraction expansion generally converge slowly: for very small $x$ we will perform many consecutive insertions of mediants. For almost all $x\in[0,1]$ we find 
\[
\lim_{n\to \infty} \frac{1}{n}\log (q_{n,y} )=0.
\]

\subsubsection{Lehner Continued Fractions - \emph{Follow the Edge with the Longer Interval!}}
Do the reverse of the nearest integer continued fraction: follow the unique arrow for which $x_{n+1}>1/2$. This is equivalent to ``pick the substitutions where the word length grows the slowest" or ``construct the first-return map on whichever interval is longer." The effect is to always perform insertions whenever possible, but never singularizations. The corresponding continued fraction representation of $x$ is the unique such expansion for which all partial quotients are one or two. To be more precise, they will be the Lehner continued fraction introduced in \cite{L} and studied in for example \cite{DK}.
The slow approximating sequence includes all possible mediants within this scheme:
\[q_0, 2q_0, \ldots, a_1 q_0, q_2, 2q_2, \ldots, a_3 q_2, q_4, \ldots\]
Note that since the $q_n$ of all previous subsections necessarily appear as a subsequence of the $q_n$ here. Since the previous subsection for almost every $x$ has sub-exponential growth, so do these. For almost every $x \in [0,1]$ we find
\[
\lim_{n\to \infty} \frac{1}{n}\log (q_{n,y} )=0.
\]
\bibliographystyle{alpha}
\bibliography{bibliography}

\end{document}